\newlength\shlength
\newcommand\xshlongvec[2][0]{\setlength\shlength{#1pt}%
  \stackengine{-5.6pt}{$#2$}{\smash{$\kern\shlength%
    \stackengine{7.55pt}{$\mathchar"017E$}%
      {\rule{\widthof{$#2$}}{.57pt}\kern.4pt}{O}{r}{F}{F}{L}\kern-\shlength$}}%
      {O}{c}{F}{T}{S}}
\theoremstyle{definition}
\newtheorem{theorem}{Theorem}[section]
\newtheorem{lemma}[theorem]{Lemma}
\newtheorem{proposition}[theorem]{Proposition}
\newtheorem{definition}[theorem]{Definition}
\newtheorem{corollary}[theorem]{Corollary}
\newtheorem{conjecture}[theorem]{Conjecture}
\newcommand{\inpr}[1]{\left\langle #1 \right\rangle}	
\newcommand{\ovl}[1]{\overline{#1}}	
\newcommand{\parn}[1]{\left( #1 \right)}
\newcommand{\conv}[1]{\mathrm{conv}\parn{#1}}	
\newcommand{\Z}{{\mathbb{Z}}}
\newcommand{\C}{{\mathbb{C}}}
\newcommand{\N}{{\mathbb{N}}}
\newcommand{\Hil}{\mathcal H}
\newcommand{\calM}{\mathcal M}
\newcommand{\Ci}{\mathcal C}
\renewcommand{\Re}{\ensuremath{\mathrm{Re}}}
\renewcommand{\Im}{\ensuremath{\mathrm{Im}}}
\author{\sc{Benjam\'in A.~Itz\'a-Ortiz}} 
\author{Rub\'en A.~Mart\'inez-Avenda\~no}
\address{Centro de Investigaci\'on en Matem\'aticas, Universidad Aut\'onoma del Estado de Hidalgo, Pachuca, Hidalgo, Mexico}
\address{Departamento Acad\'emico de Matem\'aticas, Instituto Tecnol\'ogico Aut\'onomo de M\'exico, Mexico City, Mexico}
\thanks{The second author's research is partially supported by the Asociaci\'on Mexicana de Cultura A.C.}
\begin{document}

\title{The numerical range of a class of periodic tridiagonal operators}

\begin{abstract} 
 In this paper we compute the closure of the numerical range of certain periodic tridiagonal operators. This is achieved by  showing that the closure of the numerical range of such operators can be expressed as the  closure of the convex hull of the uncountable union of numerical ranges of certain symbol matrices.  For a special case, this result can be improved so that 
 it is the convex hull of the union of the numerical ranges of only two matrices.  A conjecture is stated for the general case.
\end{abstract}
\maketitle

\section*{Introduction}

Given  $b=(b_i)_{i\in\Z}$ a biinfinite sequence in the total shift space $\mathcal A^\Z$, where $\mathcal A$ is a finite set of complex numbers, we associate a tridiagonal operator $A_b\colon \ell^2(\Z)\to\ell^2(\Z)$   defined as
 \[
 A_{b}= 
\left( \begin{array}{ccccccc}
\ddots & \ddots & & & & & \\
\ddots & 0 & 1 & & & & \\
& b_{-2} & 0 & 1 & & & \\
& & b_{-1} & \framebox[0.4cm][l]{0} & 1 & & \\
& & & b_{0} & 0 & 1 & \\
& & & & b_{1} & 0 & \ddots \\
& & & & & \ddots & \ddots
\end{array} \right) 
\]
where the rectangle marks the matrix entry at $(0,0)$. When $\mathcal A$ is the set $\{-1,1\}$,  the corresponding operator $A_b$ is related to the so called ``hopping sign model'' introduced in \cite{Feinberg} and subsequently studied in many other works, such as \cite{BebianoEtAl,CWChLi10,CWChLi13,ChD,Hagger,HaggerJFA,HI-O2016}, just to name a few. 

Except for particular cases, there are not general results to establish neither the spectrum nor  the numerical range of $A_b$.
Recall that the numerical range of a bounded operator $T$ on a Hilbert space $\mathcal H$ is defined as the set
\[
W(T):=\left\{ \langle T x, x \rangle \, : \, x \in {\mathcal H}, \, \| x \|=1 \right\}.
\]
This set turns out to have many nice properties and gives a lot of information about the operator. We mention here some of the properties of the numerical range that we will use in the sequel (most of the proofs are easy and they can be found in, for example, \cite{HoJo,GusRao}). For a bounded operator $T: \Hil \to \Hil$ we have
\begin{itemize}
    \item $W(T)$ is a bounded convex set.
    \item If $\Hil$ is finite dimensional, then $W(T)$ is a closed set.
    \item For every $a, b \in \C$, we have $W(a T+b)=a W(T)+b$.
\item $\Re(W(T))=W(\Re(T))$ where $\Re(T)=\frac{1}{2}(T+T^*)$.
\item If $\Hil$ is finite dimensional and $T$ is a Hermitian matrix, then $W(T)=[\lambda^-,\lambda^+]$, where $\lambda^-$ is the smallest eigenvalue of $T$ and $\lambda^+$ is the largest eigenvalue of $T$.
\item If $\Hil=\C^2$ and $T=\begin{pmatrix}0 & a \\ b & 0 \end{pmatrix}$, then $W(T)$ is the ellipse with focii $\pm \sqrt{a b}$ and major axis of length $|a|+|b|$.
\end{itemize}

In this paper we advocate to investigate the numerical range of $A_b$ when $b$ is a $n$-periodic sequence.  Following work of Bebiano et al. \cite{BebianoEtAl}, we find that $\overline{W(A_b)}$ can be expressed as the closure of the convex hull of uncountable union of numerical ranges of certain  symbol matrices in  $M_n(\C)$. For the case $\mathcal A=\{0,1\}$ and $b$ is a $2$-periodic sequence with period word $01$, we explicitly determine $\overline{W(A_b)}$ as the convex hull of the union of  numerical ranges of only two matrices in $M_2(\C)$. We then state a conjecture where we claim $\overline{W(A_b)}$ to be the convex hull of numerical ranges of two matrices in $M_n(\C)$ when $b$ is $n$-periodic with period word $0\cdots01$.  

We divide this work in three sections. In Section~1, we introduce the necessary concepts and notions used in the rest of the paper. In particular we justify that we may restrict to operators on $\ell^2(\N_0)$ rather than $\ell^2(\Z)$. In Section~2 we state and prove the main results of the paper. Finally, in Section~3, we state a conjecture which would greatly improve the computation the numerical range of our tridiagonal operators and verify it for the case $n=2$.

The authors gratefully acknowledge the referee's comments and suggestions which helped to improve the readability of this paper. Furthermore, the authors are deeply indebted to the referee for proposing a proof of our conjecture stated in Section~3, which we expect to be published at a later date.

\section{Preliminaries}
In this section we introduce the necessary notation and terminology needed in the paper. Since one-sided infinite tridiagonal operators are far more used than their biinfinite counterparts, we introduce notation for one-sided infinite periodic tridiagonal operators and work with them for the rest of the paper. At the end of this section, we prove that the closure of the numerical range of one-sided periodic tridiagonal operators coincides with the closure of the numerical range of their biinfinite counterparts. 
 
Fix an alphabet $\mathcal A$, that is to say,  a finite subset of complex numbers. 
For $m\in\Z$, denote the set  $\Z_{\geq m}=\{t \in \Z \, : \, t \geq m\}$ so in particular we declare $\N_0=\Z_{\geq 0}$. A sequence $a$ in $\mathcal A^\Z$ (or in $\mathcal A^{\N_0}$) is said to be $n$-periodic if $n$ is a positive integer such that $a_k=a_{k+n}$ for all  $k\in \Z$ (respectively, for all $k \in \N_0$). Therefore, if $a\in \mathcal A^\Z$ (or in $\mathcal A^{\N_0}$) is $n$-periodic then we refer to the finite subsequence $a_0 a_1\cdots a_{n-1}$  as the period word of $a$.  

Recall that, given a bounded operator $A$ on a Hilbert space $\Hil$, and given a subspace $\calM$ of $\Hil$, the {\em compression} of $A$ to $\calM$ is defined as the operator $P A \rvert_{\calM}$, where $P$ is the orthogonal projection onto $\calM$.

For a given $n\in\N$, let $a$, $b$ and $c$ be $(n+1)$-periodic infinite sequences in $\mathcal A^{\N_0}$. We will denote by $T=T(a,b,c)$ the  $(n+1)$-periodic tridiagonal operator on $\ell^2(\N_0)$ given by
\[
T= 
\left( \begin{array}{ccccccccccc}
b_0 & c_0 & & & & & & & &\\
a_1 & b_1 & c_1 & & & & & & & \\
    & a_{2} & b_2 & c_2 & & & & & & \\
    &       & \ddots & \ddots  & \ddots & & & & & \\
    & & & a_{n} & b_n & c_n & & & &\\
    & & & & a_{0} & b_0 & c_0 & & &\\
    & & & & & \ddots & \ddots & \ddots & &\\
    & & & & & &a_{n-1} & b_{n-1}& c_{n-1} & \\
    & & & & & & &  a_n& b_n  & c_n &\\
    & & & & & & & & \ddots &  \ddots&\ddots
\end{array} \right). \]

We should observe that $T$ is a bounded operator since the sum of the moduli of the entries in each column (and in each row) is uniformly bounded (see, e.g., \cite[Example 2.3]{Kato}). The same is true for the biinfinite matrix $A_b$, as long as the biinfinite sequence arises from a finite alphabet.

Fix $s \in \N$, $s>1$, and let $m:=s(n+1)$. We then define  $C_m$ to be a corresponding circulant matrix of $T$, an $m\times m$ matrix, as: 
\[
C_{m}= 
\left( \begin{array}{cccccccccc}
b_0 & c_0 & 0 & & & & & & 0 & a_0\\
a_1 & b_1 & c_1 & 0 & & & & &  & 0 \\
0   & a_{2} & b_2 & c_2 & 0 & & & & & \\
    & \ddots & \ddots & \ddots  & \ddots & \ddots & & & & \\
    & & 0 & a_{n} & b_n & c_n & 0 & & &\\
    & & & 0 & a_{0} & b_0 & c_0 & 0 & &\\
    & & & & \ddots & \ddots & \ddots & \ddots &\ddots \\
    & & & & & \ddots & \ddots & \ddots & \ddots  & \\
0 & & & & & & 0 &a_{n-1} & b_{n-1}& c_{n-1}  \\
c_n & 0 & & & & & &  0 & a_n& b_n 
\end{array} \right). 
\]
We observe that by removing the last column and the last row of $C_m$ we obtain a matrix which is a compression of $T$. This observation will be useful later.

Finally, if $n>1$, for each $\phi \in [0, 2\pi)$, we define the corresponding symbol of $T$,  as the following  $(n+1)\times (n+1)$ matrix
\[
T_{\phi}= 
\left( \begin{array}{ccccccc}
 b_0 & c_0 & 0 & & & 0 & a_0 e^{-i\phi}\\
a_1 & b_1 & c_1 & 0 & & & 0\\
0   & a_{2} & b_2 & c_2 & 0 & & \\
    & \ddots & \ddots &\ddots & \ddots & \ddots   & \\
& & 0 & a_{n-2} & b_{n-2} & c_{n-2} & 0\\
0 & & & 0 & a_{n-1} & b_{n-1} & c_{n-1} \\
c_n e^{i\phi} & 0 & & & 0 & a_n & b_n
\end{array} \right); 
\]
while the symbol  of $T$ for $n=1$ is the $2\times 2$ matrix 
\begin{equation}\label{eq:symbol2}
    T_{\phi}=
        \begin{pmatrix}
        b_0 & c_0+a_0e^{-i\phi}\\ 
        a_1+c_1 e^{i\phi} & b_1  
        \end{pmatrix}.
\end{equation}

To conclude this section, we will justify why the (closure of the) numerical ranges of our tridiagonal operators on $\ell^2(\Z)$ and their compressions to $\ell^2(\N_0)$ are equal.

\begin{proposition}
     Let $b=(b_i)_{i\in\Z}$ be an $n$-periodic biinfinite sequence and let $A_b$ be the corresponding biinfinite tridiagonal operator. Let $P$ be  the projection of $\ell^2(\Z)$ onto the subspace $\ell^2(\N_0)$. If $T$ is the compression of $A_b$ to $\ell^2(\N_0)$, i.e.
    $T=P A_b \rvert_{\ell^2(\N_0)}$, then
     \[
      W(T) \subseteq W(A_b) \subseteq \overline{W(T)}.
   \]
   In particular,
   \[
     \overline{W(T)}=\overline{W(A_b)}.
     \]
\end{proposition}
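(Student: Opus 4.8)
The plan is to establish the two inclusions separately, since the final equality follows immediately: taking closures in $W(T)\subseteq W(A_b)$ gives $\overline{W(T)}\subseteq\overline{W(A_b)}$, while $W(A_b)\subseteq\overline{W(T)}$ gives $\overline{W(A_b)}\subseteq\overline{W(T)}$. The first inclusion $W(T)\subseteq W(A_b)$ is the easy one and is essentially a general fact about compressions: if $x\in\ell^2(\N_0)$ with $\|x\|=1$, regard $x$ as an element of $\ell^2(\Z)$ supported on $\N_0$; then $Px=x$, so $\langle Tx,x\rangle=\langle PA_b x,x\rangle=\langle A_b x,Px\rangle=\langle A_b x,x\rangle$, which exhibits $\langle Tx,x\rangle$ as an element of $W(A_b)$. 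No periodicity is needed here.

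The substantive part is $W(A_b)\subseteq\overline{W(T)}$. I would use a shift/translation-invariance argument. Fix a unit vector $x=(x_i)_{i\in\Z}\in\ell^2(\Z)$ and let $\lambda=\langle A_b x,x\rangle$. The idea is to approximate $x$ by a finitely supported vector and then translate that vector far to the right so that its support lies in $\N_0$, using $n$-periodicity of $b$ to control how $A_b$ acts after translation. Concretely, given $\ve>0$, choose $N$ so that the truncation $x^{(N)}$ of $x$ to coordinates $[-N,N]$ satisfies $\|x-x^{(N)}\|<\ve$; since $A_b$ is bounded, $|\langle A_b x^{(N)},x^{(N)}\rangle-\lambda|$ is small (and after renormalizing $x^{(N)}$ to a unit vector, still small). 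Now let $S$ denote the shift on $\ell^2(\Z)$ and consider $y=S^{kn}x^{(N)}$ for a large positive integer $k$, chosen so that the support of $y$ lies in $\N_0$. Because $b$ is $n$-periodic, $S^{-kn}A_b S^{kn}$ agrees with $A_b$ as a biinfinite matrix, so $\langle A_b y,y\rangle=\langle A_b x^{(N)},x^{(N)}\rangle$; moreover, since $y$ is supported in $\N_0$, the first inclusion's computation gives $\langle A_b y,y\rangle=\langle Ty/\|y\|,y/\|y\|\rangle\cdot\|y\|^2=\langle T\tilde y,\tilde y\rangle$ for the unit vector $\tilde y=y/\|y\|$. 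Hence $\langle T\tilde y,\tilde y\rangle$ lies within $O(\ve)$ of $\lambda$, so $\lambda\in\overline{W(T)}$.

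The main obstacle — really the only place that requires care — is verifying that conjugation of $A_b$ by the shift $S^{n}$ leaves the biinfinite matrix unchanged: one must check that the off-diagonal entries $b_i$ (and the constant $1$'s) are permuted back to themselves, which is exactly the content of $n$-periodicity and the fact that the super/sub-diagonal entries are constant. A secondary bookkeeping point is ensuring the translate $S^{kn}x^{(N)}$ is genuinely supported in $\{0,1,2,\dots\}$ and that compressing $A_b$ to that support reproduces $T$ acting on $\tilde y$ — this uses that $T$ is literally the compression $PA_b|_{\ell^2(\N_0)}$ together with the periodic structure so that $T$ ``looks the same'' starting from any multiple of $n$. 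Finally, I would assemble the $\ve\to 0$ limit to conclude $\lambda\in\overline{W(T)}$, and then combine with the first inclusion and take closures to obtain $\overline{W(T)}=\overline{W(A_b)}$.
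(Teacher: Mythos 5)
Your proof is correct and is essentially the paper's argument in dual form: where you truncate $x$ to a finite window and translate it rightward by $S^{kn}$ into $\N_0$ (using the periodicity identity $S^{-kn}A_bS^{kn}=A_b$), the paper instead keeps $x$ fixed, projects it onto $\ell^2(\Z_{\geq kn})$ with $k\to-\infty$, and notes that the compression of $A_b$ to that half-line is unitarily equivalent to $T$ --- the same $n$-periodicity fact in either phrasing, with the same error terms vanishing in the limit. The only point to tidy is the normalization: since $\|y\|=\|x^{(N)}\|<1$ in general, $\langle A_by,y\rangle=\|y\|^2\langle T\tilde y,\tilde y\rangle$ rather than $\langle T\tilde y,\tilde y\rangle$, so renormalize $x^{(N)}$ before shifting (as you anticipated parenthetically); this is an $O(\ve)$ correction that does not affect the conclusion.
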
   
\begin{proof}
    The inclusion $W(T) \subseteq W(A_b)$ is straightforward. 
    
    For the other inclusion, we need the following definitions. For each $m\in\Z$ let $P_m$ be the projection of $\ell^2(\Z)$ onto the subspace $\ell^2(\Z_{\geq m})$. Define $T_m : \ell^2(\Z_{\geq m}) \to \ell^2(\Z_{\geq m})$ by $T_m =P_m A_b \rvert_{\ell^2(\Z_{\geq m})}$. Observe that $T_0=T$ and $P_0=P$. It is clear that, for each $k\in \Z$, the operators $T_{kn}$ and $T$ are unitarily equivalent (just write out the matrices for each operator) and hence $W(T_{kn})=W(T)$.

    Now, let $\lambda\in W(A_b)$. Then there is $x\in\ell^2(\Z)$ with $\|x\|=1$ such that $\lambda=\langle A_bx,x\rangle$. Fix $m \in \Z$. Since $A_b=P_m A_b + (I-P_m) A_b = P_m A_b P_m + P_m A_b (I-P_m) + (I-P_m) A_b$, we have 
    \begin{equation*}
        \begin{split}
    \lambda &=  \langle P_m A_b P_m x,x\rangle  + \langle P_m A_b (I-P_m)x,x\rangle  + \langle (I-P_m) A_b x,x\rangle \\
    &=\langle A_b P_m x,P_m x\rangle  + \langle A_b (I-P_m) x,P_m x \rangle  + \langle A_b x,(I-P_m)x\rangle.
    \end{split}
    \end{equation*}
    Observe also that 
    \[
    \left\langle T_m \tfrac{P_m x}{\|P_m x\|},\tfrac{P_m x}{\|P_m x\|} \right\rangle = \left\langle A_b \tfrac{P_m x}{\|P_m x\|},\tfrac{P_m x}{\|P_m x\|} \right\rangle.
    \]
    The two previous equations then give
    \begin{align*}
        \lambda-\left\langle T_m\tfrac{P_m x}{\|P_m x\|},\tfrac{P_m x}{\|P_m x\|} \right\rangle 
            = {} &  \langle A_b P_m x,P_m x\rangle  -\left\langle A_b \tfrac{P_m x}{\|P_m x\|},\tfrac{P_m x}{\|P_m x\|} \right\rangle \\
           & + \langle A_b (I-P_m) x,P_m x \rangle  + \langle A_b x,(I-P_m)x\rangle \\
           = {} &  \left(1-\tfrac{1}{\| P_m x \|^2} \right) \langle A_b P_m x,P_m x\rangle  \\
           & + \langle A_b (I-P_m) x,P_m x \rangle  + \langle A_b x,(I-P_m) x\rangle.
        \end{align*}
    Hence, since $\|P_m x \| \leq \|x \| = 1$, we obtain
    \begin{align*}
        \left| \lambda-\left\langle T_m\tfrac{P_m x}{\|P_m x\|},\tfrac{P_m x}{\|P_m x\|} \right\rangle \right| 
        \leq {} & \left|1-\tfrac{1}{\| P_m x \|^2} \right| \  \left| \langle A_b P_m x, P_m x\rangle \right| \\
        & + \left| \left\langle A_b (I-P_m) x, P_m x\right\rangle \right| +
             \left| \langle A_b x, (I-P_m) x\rangle\right|\\
         \leq {} & \left|1-\tfrac{1}{\|P_m x\|^2}\right| \ \| A_b \| + \| A_b \| \  \|(I-P_m) x  \| \\
         & +\| A_b \| \  \|(I-P_m)x\|\\
         &\to 0 \text{ as } m\to -\infty,
    \end{align*}
    since $\|P_m x \| \to 1$ and $\|(I-P_m) x \| \to 0$ as $m \to -\infty$. 
    
    Therefore, given any $\epsilon>0$, there is a negative integer $k$ and $\mu\in W(T_{kn})=W(T)$ such that $|\lambda-\mu|<\epsilon$. Thus $W(A_b)\subseteq \overline{W(T)}.$
\end{proof}

\section{Main results}

In this section we state and prove the main results of this paper. Applications to a specific alphabet will be given in the next section.  Our goal is to realize the closure of the numerical range of a  periodic tridiagonal operator as the closure of the convex hull of the union of numerical ranges of its symbol matrices. For this we follow closely the work of Bebiano et al.~\cite{BebianoEtAl}. First we will observe that a circulant matrix associated to a periodic tridiagonal operator is similar to the matrix resulting from the direct sum of its associated matrix symbols.  

Recall that $n\in \N$ and $s>1$ are given integers and $m=s(n+1)$. For each $0\leq k< s$, let $\phi_k=\dfrac{2\pi k}{s}$ and denote $\rho_k=\exp(i\phi_k)$
Let us define, for each $0\leq j \leq n$, the $m$-vector 
\begin{equation}\label{ujk}
    u_{j,k}=\tfrac{1}{\sqrt s}(\underbrace{0, \dots, 0}_{j}, 1, \underbrace{0, \dots, 0}_{n}, \rho_k, \underbrace{0, \dots, 0}_{n}, \rho_{k}^{2}, \underbrace{0, \dots, 0}_{n}, \dots,
    \rho_{k}^{s-1}, \underbrace{0, \dots, 0}_{n-j})^T
\end{equation} 

The following lemma is straightforward computation. We will omit some of the details in the  proof.

\begin{lemma}
Let $n \in \N$, let $s \in \N$, $s>1$, and define $m=s(n+1)$. The set $\{ u_{j,k} \in \C^m \, : \, 0\leq j \leq n, \ 0\leq k <s \}$, where $u_{j,k}$ is defined as in Equation~\eqref{ujk}, is an orthonormal basis for $\C^m$.
\end{lemma}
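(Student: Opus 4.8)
The plan is to exploit the sparse, block-periodic structure of the vectors $u_{j,k}$. First I would pin down precisely the support of each $u_{j,k}$: reading off Equation~\eqref{ujk}, the nonzero entries of $u_{j,k}$ occupy exactly the coordinates $j+1+\ell(n+1)$ for $\ell=0,1,\dots,s-1$ (in $1$-based indexing), the value in the $\ell$-th such slot being $\tfrac{1}{\sqrt s}\rho_k^{\ell}$. The identity $j+1+(s-1)(n+1)+(n-j)=s(n+1)=m$ confirms that these vectors genuinely lie in $\C^m$ and that the trailing block of $n-j$ zeros has the correct length.

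Second, I would observe that the support of $u_{j,k}$ depends only on $j$, and consists of exactly the indices congruent to $j+1$ modulo $n+1$. As $j$ ranges over $0,1,\dots,n$, these residue classes are pairwise disjoint and their union is all of $\{1,\dots,m\}$, each class containing exactly $s$ indices. Hence for $j\neq j'$ the vectors $u_{j,k}$ and $u_{j',k'}$ have disjoint supports and $\langle u_{j,k},u_{j',k'}\rangle=0$ for free; it remains only to deal with the inner products within a fixed $j$.

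Third, for fixed $j$, restricting $u_{j,k}$ to the $s$ coordinates of its residue class identifies it (up to the common factor $\tfrac{1}{\sqrt s}$) with $(1,\rho_k,\rho_k^2,\dots,\rho_k^{s-1})\in\C^s$. Since $\rho_k=e^{2\pi i k/s}$, these are the normalized columns of the $s\times s$ discrete Fourier matrix, and
\[
\langle u_{j,k},u_{j,k'}\rangle=\frac1s\sum_{\ell=0}^{s-1}\rho_k^{\ell}\,\overline{\rho_{k'}^{\ell}}=\frac1s\sum_{\ell=0}^{s-1}e^{2\pi i(k-k')\ell/s}=\delta_{k,k'},
\]
the last equality being the standard orthogonality relation for $s$-th roots of unity (the geometric sum vanishes unless $k\equiv k'\pmod s$, that is, unless $k=k'$ since $0\le k,k'<s$). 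Together with the previous step this shows the whole family is orthonormal.

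Finally, the family has cardinality $(n+1)s=m=\dim\C^m$, so being an orthonormal set of the right size it is automatically an orthonormal basis. I do not anticipate any real obstacle: the only place requiring care is the index bookkeeping in the first step --- correctly locating the nonzero entries of $u_{j,k}$ and checking that as $j$ varies they partition the coordinates into the residue classes modulo $n+1$ --- after which the orthonormality collapses to the orthogonality of the characters of $\Z/s\Z$, which is presumably why the authors call the proof a straightforward computation.
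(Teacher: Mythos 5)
Your proof is correct and follows essentially the same route as the paper's: orthogonality across different $j$ comes from disjoint supports, orthogonality within a fixed $j$ from the standard root-of-unity (character) sum, and completeness from counting $(n+1)s=m$ vectors. You simply make explicit the index bookkeeping that the paper leaves as a ``straightforward computation.''
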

\begin{proof}
    It is clear that $\|u_{j,k}\|=1$ and a computation shows that if $k\neq l$ then
    \begin{align*}
        \langle u_{j,k} , u_{j,l}\rangle &= 
        1+\rho_k\overline{\rho_l}+\rho_k^2\overline{\rho_l^2}+\cdots+
        \rho_k^{s-1}\overline{\rho_l^{s-1}}\\
        &=
        1+e^{i\frac{2\pi}{s}(k-l)}+\left(e^{i\frac{2\pi}{s}(k-l)}\right)^2+\cdots+
        \left(e^{i\frac{2\pi}{s}(k-l)}\right)^{s-1}\\
        &=0.
    \end{align*}
    Also, it is clear that $\langle u_{i,k} , u_{j,l}\rangle=0$ if $i\neq j$.
\end{proof}

The following theorem shows that the matrix $C_m$, which will play an important role in what follows, can be written as a direct sum of simpler parts.

\begin{theorem}\label{similar}
Let $n, s \in \N$ and let $s >1$. For $m=s(n+1)$, let $C_m$ be the circulant matrix associated to a tridiagonal $(n+1)$-periodic operator $T$. For $0\leq k <s$, let $\phi_k=\frac{2\pi k}{s}$. Then $C_m$ is unitarily equivalent to a block diagonal matrix where the main diagonal blocks are the symbols $T_{\phi_k}$. More precisely, there exists a unitary matrix $U\in M_{m}(\C)$ such that
\begin{equation*}
    C_m=U\left(T_{\phi_0}\oplus T_{\phi_1}\oplus\cdots\oplus T_{\phi_{s-1}} \right)U\sp\ast .
\end{equation*}
\end{theorem}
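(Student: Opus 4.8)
The plan is to recognize the orthonormal basis of the previous lemma, listed in the right order, as the basis in which $C_m$ becomes block diagonal with the symbols as diagonal blocks. Order the vectors $u_{j,k}$ lexicographically in $(k,j)$, that is,
\[
u_{0,0},\,u_{1,0},\,\dots,\,u_{n,0},\ u_{0,1},\,\dots,\,u_{n,1},\ \dots,\ u_{0,s-1},\,\dots,\,u_{n,s-1},
\]
and let $U\in M_m(\C)$ be the matrix whose columns are these vectors in this order; by the lemma $U$ is unitary. The theorem will follow once we show that, for each $k$, the subspace $V_k:=\operatorname{span}\{u_{0,k},\dots,u_{n,k}\}$ is $C_m$-invariant and the matrix of $C_m|_{V_k}$ in the ordered basis $(u_{0,k},\dots,u_{n,k})$ equals $T_{\phi_k}$, i.e.
\[
C_m\,u_{j,k}=\sum_{i=0}^{n}(T_{\phi_k})_{ij}\,u_{i,k},\qquad 0\le j\le n,\ 0\le k<s .
\]
Indeed, this is precisely the statement $C_mU=U\big(T_{\phi_0}\oplus\cdots\oplus T_{\phi_{s-1}}\big)$, and right-multiplication by $U\sp\ast$ yields the desired identity.

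To verify the displayed identity one computes $C_m u_{j,k}$ directly. It is convenient to index the coordinates of $\C^m$ by pairs $(t,j)$ with $0\le t<s$ (the period) and $0\le j\le n$ (the position inside the period), so that $u_{j,k}$ is supported on the coordinates $(0,j),(1,j),\dots,(s-1,j)$ with respective weights $\tfrac{1}{\sqrt s}\rho_k^{t}$. From the banded-with-corners shape of $C_m$ one reads off which entries of $C_m$ lie in a given column $(t,j)$: for $1\le j\le n-1$ only entries within the same period occur, giving $C_m u_{j,k}=c_{j-1}u_{j-1,k}+b_j u_{j,k}+a_{j+1}u_{j+1,k}$, which is exactly the $j$-th column of $T_{\phi_k}$. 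For $j=0$ and $j=n$ one must in addition account for the two corner entries of $C_m$, namely $a_0$ in position $(0,m-1)$ and $c_n$ in position $(m-1,0)$: these couple the column $(t,n)$ to the period $t+1$ and the column $(t,0)$ to the period $t-1$, both indices read modulo $s$. Since $\rho_k^{s}=1$, such a shift of the period index by $\pm1$ multiplies the weights by $\rho_k^{\pm1}=e^{\pm i\phi_k}$, and one obtains
\[
C_m u_{0,k}=b_0u_{0,k}+a_1u_{1,k}+c_ne^{i\phi_k}u_{n,k},\qquad
C_m u_{n,k}=a_0e^{-i\phi_k}u_{0,k}+c_{n-1}u_{n-1,k}+b_nu_{n,k},
\]
which are the first and the last columns of $T_{\phi_k}$ as displayed. (When $n=1$ these two computations fall onto a single pair of coordinates and reproduce \eqref{eq:symbol2}, the terms $a_1$ and $c_1e^{i\phi_k}$ — respectively $c_0$ and $a_0e^{-i\phi_k}$ — landing in the same matrix entry and adding.)

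The only delicate point is the bookkeeping of the two corner entries of $C_m$: one must check that it is $a_0$ that feeds column $(t,n)$ from the row lying in period $t+1$, and $c_n$ that feeds column $(t,0)$ from the row in period $t-1$ (indices modulo $s$), since these off-period couplings are exactly what produces the $e^{\pm i\phi_k}$ factors distinguishing the symbol $T_{\phi_k}$ from the plain tridiagonal block. Everything else is the routine observation that a tridiagonal matrix sends a standard basis vector to the evident three-term combination of its neighbours, so I expect no real obstacle beyond careful index tracking.
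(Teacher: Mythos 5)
Your proposal is correct and follows essentially the same route as the paper's proof: compute $C_m u_{j,k}$ for the orthonormal basis of the preceding lemma, observe that each span of $\{u_{0,k},\dots,u_{n,k}\}$ is invariant with matrix $T_{\phi_k}$, and assemble $U$ from these columns (your corner bookkeeping, giving $c_n e^{i\phi_k}$ and $a_0 e^{-i\phi_k}=a_0\rho_k^{s-1}$, matches the paper's computation exactly). The only difference is that you spell out the corner-entry tracking and the degenerate $n=1$ case, which the paper leaves as ``a straightforward computation.''
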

\begin{proof}
  Recall that $\rho_k=\exp(i \phi_k)$. A computation shows that, for $0\leq k <s$,
  \begin{align*}
      C_m\, u_{0,k}&=b_0u_{0,k}+a_1u_{1,k}+c_n{\rho_k}u_{n,k},\\
      C_m\, u_{n,k} &=\rho_k^{s-1} a_0 u_{0,k}+c_{n-1}u_{n-1, k}+b_n u_{n, k},
 \end{align*}
  and for $1\leq j\leq n-1$
  \begin{equation*}
      C_m\, u_{j,k}=c_{j-1}u_{j-1,k}+b_ju_{j,k}+a_{j+1}u_{j+1,k}
  \end{equation*}
  These equations can be written succinctly as the matrix equation
  \[
  \begin{pmatrix} C_m u_{0,k} \\  C_m u_{1,k} \\  C_m u_{2,k} \\ \vdots\\ \vdots \\ C_m u_{n-1,k} \\   C_m u_{n,k} \end{pmatrix}
  = \begin{pmatrix} 
  b_0 & a_1 &  0   &    &                   & 0 & c_n \rho_k\\
  c_0 & b_1 & a_2  & 0  & &  & 0\\
    0 & c_1 & b_2 & a_3 & 0 \\
     &   & \ddots & \ddots & \ddots \\
     &   &  & \ddots & \ddots & \ddots \\
    0 &     &         & 0 &  c_{n-2} & b_{n-1} & a_n \\
 \rho_k^{s-1} a_0& 0   &   &    & 0  &  c_{n-1} & b_n 
  \end{pmatrix}
  \begin{pmatrix} u_{0,k} \\  u_{1,k} \\  u_{2,k} \\ \vdots\\ \vdots\\ u_{n-1,k} \\   u_{n,k} \end{pmatrix},
  \]
  where the entries in each {\em column vector} are vectors in $\C^m$.
  
  We now define the unitary matrix $U$ as the $m\times m$ matrix with columns given by
  \[
  \begin{pmatrix} 
  \vrule & \vrule & & \vrule &\vrule & \vrule & & \vrule & & &\vrule & \vrule &  &\vrule  \\
  u_{0,0} & u_{1,0} & \dots & u_{n,0} & u_{0,1} & u_{1,1} &\dots & u_{n,1} & \dots & \dots & u_{0,s-1} & u_{1,s-1} &\dots & u_{n,s-1} \\
    \vrule & \vrule & & \vrule &\vrule & \vrule & & \vrule & & & \vrule & \vrule & & \vrule
  \end{pmatrix}.
  \]
  A straightforward argument now shows that $U^* C_m U$ is a block diagonal matrix, with blocks the $(n+1) \times (n+1)$ matrices $T_{\phi_k}$, for $0\leq k < s$. That finishes the proof.
\end{proof}

The following proposition is probably well-known to the experts; however, since we are unable to find a reference in the literature, we include a proof here for completeness sake.

\begin{proposition}\label{W(T)=unionW(Tn)}
Let $T$ be an operator on $\ell^2(\N_0)$ and let $T_k$ be the $k\times k$ matrix which is the compression of $T$ to the subspace formed by the first $k\geq 1$ components of $\ell^2(\N_0)$. Then
\[
\bigcup_{k=1}^{\infty}W(T_k) \subseteq W(T) \subseteq \overline{\bigcup_{k=1}^{\infty}W(T_k)}.
\]
and hence,
\[
    \overline{W(T)}=\overline{\bigcup_{k=1}^{\infty}W(T_k)}.
\]
\end{proposition}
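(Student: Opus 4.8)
The plan is to prove the two inclusions separately, since the final equality about closures follows immediately from them (taking closures of the chain of inclusions and using that $W(T)$ sits between a set and its closure).

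For the first inclusion $\bigcup_{k=1}^\infty W(T_k) \subseteq W(T)$: fix $k$ and let $\lambda \in W(T_k)$. Then $\lambda = \langle T_k y, y \rangle$ for some unit vector $y \in \C^k$. Let $P_k$ be the orthogonal projection of $\ell^2(\N_0)$ onto the subspace $\mathcal M_k$ spanned by the first $k$ standard basis vectors $e_0, \dots, e_{k-1}$, so that $T_k = P_k T \rvert_{\mathcal M_k}$ under the obvious identification $\mathcal M_k \cong \C^k$. Regard $y$ as a unit vector $x \in \ell^2(\N_0)$ supported on the first $k$ coordinates. Since $P_k x = x$, we get $\langle T x, x \rangle = \langle T x, P_k x \rangle = \langle P_k T x, x \rangle = \langle T_k y, y \rangle = \lambda$, so $\lambda \in W(T)$. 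As $k$ was arbitrary, the union is contained in $W(T)$.

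For the second inclusion $W(T) \subseteq \overline{\bigcup_{k=1}^\infty W(T_k)}$: let $\lambda \in W(T)$, so $\lambda = \langle T x, x \rangle$ for some unit vector $x \in \ell^2(\N_0)$. For each $k$, set $x_k = P_k x$, the truncation of $x$ to its first $k$ coordinates. Since $x \in \ell^2(\N_0)$, we have $\|x_k\| \to 1$ and $\|(I-P_k)x\| \to 0$ as $k \to \infty$; in particular $x_k \neq 0$ for $k$ large, so $\mu_k := \langle T_k \frac{x_k}{\|x_k\|}, \frac{x_k}{\|x_k\|}\rangle = \frac{1}{\|x_k\|^2}\langle T x_k, x_k\rangle \in W(T_k)$. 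Using the decomposition $x = x_k + (I-P_k)x$ together with bilinearity of the inner product, one estimates
\begin{align*}
\left| \lambda - \mu_k \right| &= \left| \langle T x, x\rangle - \tfrac{1}{\|x_k\|^2}\langle T x_k, x_k\rangle \right| \\
&\leq \left|1 - \tfrac{1}{\|x_k\|^2}\right|\,\|T\| + 2\,\|T\|\,\|(I-P_k)x\|,
\end{align*}
which tends to $0$ as $k \to \infty$ (this is the same kind of estimate already carried out in the proof of Proposition~1.1, so it can be stated with minimal detail). Hence $\lambda$ is a limit of points $\mu_k \in \bigcup_k W(T_k)$, giving $\lambda \in \overline{\bigcup_{k=1}^\infty W(T_k)}$.

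Finally, taking closures throughout the chain $\bigcup_k W(T_k) \subseteq W(T) \subseteq \overline{\bigcup_k W(T_k)}$ yields $\overline{\bigcup_k W(T_k)} \subseteq \overline{W(T)} \subseteq \overline{\bigcup_k W(T_k)}$, hence equality. The only mildly delicate point is the norm estimate in the second inclusion — controlling $\langle T x, x\rangle - \frac{1}{\|x_k\|^2}\langle T x_k, x_k\rangle$ uniformly in terms of $\|T\|$ and the tail $\|(I-P_k)x\|$ — but since the argument mirrors the one already given in Proposition~1.1 almost verbatim, it presents no real obstacle.
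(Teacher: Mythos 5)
Your proof is correct and follows essentially the same route as the paper's: the first inclusion via the compression property, and the second by truncating the unit vector, renormalizing, and bounding $|\lambda-\mu_k|$ in terms of $\bigl|1-\|x_k\|^{-2}\bigr|$ and the tail $\|(I-P_k)x\|$. You merely make the norm estimate slightly more explicit than the paper does; no substantive difference.
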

\begin{proof}
   Since for each $1\leq k <\infty$ we have that $T_k$ is a compression of $T$, it then follows that 
   \[
   W(T_1) \subseteq W(T_2) \subseteq W(T_3) \subseteq W(T_4) \subseteq \cdots \subseteq W(T)
   \]
   and so  $\bigcup_{k=1}^{\infty}W(T_k) \subseteq W(T)$.

   For the other inclusion, let $\lambda$ belong to $W(T)$ so that $\lambda=\inpr{Tx,x}$ for some $x\in\ell^2(\N_0)$ with $\|x\|=1$. Let us denote by $\vec{x}_k$ the vector in $\C^k$ consisting of the first $k$-components of $x$. Then we have
   
  \begin{align*} 
      \left|\lambda -\inpr{T_k\left(\frac{\vec{x}_k}{\|\vec{x}_k\|}\right),\frac{\vec{x}_k}{\|\vec{x}_k\|} }\right|  
    &\leq
      \left|\lambda - \inpr{T_k\vec{x}_k,\vec{x}_k} \right| 
        + \left| \inpr{ T_k\vec{x}_k,\vec{x}_k }  
            -\inpr{T_k\left(\frac{\vec{x}_k}{\|\vec{x}_k\|}\right),\frac{\vec{x}_k}{\|\vec{x}_k\|} }\right| \\
    &= \left|\lambda - \inpr{T_k\vec{x}_k,\vec{x}_k} \right|
     +  \left| 1-\frac{1}{\|\vec{x}_k\|^2} \right| \ \left| \inpr{T \vec{x}_k, \vec{x}_k} \right| \\
    &\to 0 \text{ as } k\to\infty, 
  \end{align*}
  since $\vec{x}_k \to x$ (if we allow ourselves to think of $\vec{x}_k$ as a vector in $\ell^2(\N_0)$) and hence $\inpr{T_k \vec{x}_k, \vec{x}_k} \to  \inpr{T x, x}= \lambda$.
  
  Hence, for each $\epsilon >0$, there exists $K\in \N$ and $\mu \in W(T_K) \subseteq \bigcup_{k=1}^{\infty}W(T_k)$ such that $| \lambda - \mu|<\epsilon$. Thus $\lambda\in\overline{\bigcup_{k=1}^{\infty}W(T_k)}$, as was to proved.
\end{proof}

We now establish one of the inclusions in our main result as a consequence of our previous work.

\begin{corollary} \label{OneInclusion}
Let $n\in \N$, let $T$ be a $(n+1)$-periodic tridiagonal operator and let $T\sb\phi$ be the symbol of $T$. Then
\begin{equation*}
    \overline{W(T)}
    \subseteq \overline{\conv{\bigcup_{\phi\in[0,2\pi)}W(T_{\phi_k})}}
\end{equation*}
\end{corollary}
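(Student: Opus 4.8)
The plan is to chain together the results already established in the excerpt. The target inclusion is $\overline{W(T)}\subseteq\overline{\conv{\bigcup_{\phi\in[0,2\pi)}W(T_\phi)}}$, and the natural route is: approximate $T$ by its finite compressions, recognize those compressions inside circulant matrices, split the circulants via Theorem~\ref{similar}, and finally use that the numerical range of a direct sum is the convex hull of the numerical ranges of the summands.

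First I would invoke Proposition~\ref{W(T)=unionW(Tn)} to write $\overline{W(T)}=\overline{\bigcup_{k\geq 1}W(T_k)}$, where $T_k$ is the compression of $T$ to the first $k$ coordinates. It therefore suffices to show $W(T_k)\subseteq\overline{\conv{\bigcup_{\phi\in[0,2\pi)}W(T_\phi)}}$ for every $k$. Given $k$, I would choose $s>1$ large enough that $m=s(n+1)\geq k+1$, and use the observation recorded right after the definition of $C_m$: deleting the last row and column of $C_m$ yields a compression of $T$, hence (by monotonicity of numerical ranges of nested compressions, exactly as in the proof of Proposition~\ref{W(T)=unionW(Tn)}) we get $W(T_k)\subseteq W\bigl(C_m^{(m-1)}\bigr)\subseteq W(C_m)$, where $C_m^{(m-1)}$ is that $(m-1)\times(m-1)$ compression. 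So it is enough to control $W(C_m)$.

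Next I would apply Theorem~\ref{similar}: $C_m$ is unitarily equivalent to $T_{\phi_0}\oplus\cdots\oplus T_{\phi_{s-1}}$ with $\phi_k=2\pi k/s$, and since unitary equivalence preserves the numerical range, $W(C_m)=W\bigl(\bigoplus_{k=0}^{s-1}T_{\phi_k}\bigr)$. Now I use the standard fact that $W(A\oplus B)=\conv\bigl(W(A)\cup W(B)\bigr)$ (which follows from the block form $\langle(A\oplus B)(x,y),(x,y)\rangle=\|x\|^2\langle A\frac{x}{\|x\|},\frac{x}{\|x\|}\rangle+\|y\|^2\langle B\frac{y}{\|y\|},\frac{y}{\|y\|}\rangle$ with $\|x\|^2+\|y\|^2=1$, iterated over the $s$ blocks; for $n=1$ one checks the case directly is not needed since the same algebra applies). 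Hence $W(C_m)=\conv\bigl(\bigcup_{k=0}^{s-1}W(T_{\phi_k})\bigr)\subseteq\conv\bigl(\bigcup_{\phi\in[0,2\pi)}W(T_\phi)\bigr)$. Combining the three displayed inclusions and taking closures yields the corollary.

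The step requiring the most care is the reduction from $T_k$ to $C_m$: one must make sure that the $(m-1)\times(m-1)$ matrix obtained by deleting the last row and column of $C_m$ really is the compression of $T$ to the first $m-1$ coordinates (which is exactly the claim flagged in Section~1), and that $k\le m-1$ so that $W(T_k)\subseteq W(C_m^{(m-1)})$ holds by the nesting of compressions. Everything else is a formal assembly of results already proved in the excerpt, plus the elementary identity for the numerical range of a direct sum, so I do not anticipate genuine obstacles there; the only mild subtlety is that $C_m$ depends on the auxiliary parameter $s$, but since we are free to let $s\to\infty$ and take closures, and since each $W(T_{\phi_k})$ is literally one of the sets in the union over $\phi\in[0,2\pi)$, no limiting argument on the symbols themselves is required.
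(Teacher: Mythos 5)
Your proposal is correct and follows essentially the same route as the paper's proof: approximate by finite compressions via Proposition~\ref{W(T)=unionW(Tn)}, embed $W(T_{m-1})$ in $W(C_m)$ by the deleted row/column observation, and decompose $W(C_m)$ as $\conv{\bigcup_{k=0}^{s-1}W(T_{\phi_k})}$ via Theorem~\ref{similar} and the direct-sum identity for numerical ranges. The only cosmetic difference is that you make explicit the identity $W(A\oplus B)=\conv{W(A)\cup W(B)}$ and the choice of $s$ relative to $k$, both of which the paper leaves implicit.
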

\begin{proof}
First of all, as was done in the previous proposition, we denote by $T_k$ the compression of the operator $T$ to the subspace formed by the first $k$ components of $\ell^2(\N_0)$. 

For every positive integer $s>1$, let $m=s(n+1)$. We notice that by removing the last row and the last column in $C_m$ we obtain the matrix $T_{m-1}$ and so $W(T_{m-1})\subseteq W(C_m)$. Applying Theorem~\ref{similar} we get $W(C_m)=\conv{\bigcup_{k=0}^{s-1} W(T_{\phi_k})}$ and therefore $W(T_{m-1})\subseteq \conv{\bigcup_{k=0}^{s-1}W(T_{\phi_k})}$. 

Now, clearly
\[
\bigcup_{k=0}^{s-1}W(T_{\phi_k}) \subseteq \bigcup_{\phi\in[0,2\pi)}W(T_{\phi}),
\]
and thus it follows that $W(T_{m-1})\subseteq \conv{\bigcup_{\phi\in[0,2\pi)}W(T_{\phi})}$. Hence, since
\[
W(T_1) \subseteq W(T_2) \subseteq W(T_3) \subseteq \dots,
\]
we obtain  
\[
\bigcup_{k=1}^{\infty}W(T_k) \subseteq \conv{\bigcup_{\phi\in[0,2\pi)} W(T_{\phi_k})}.
\]
Applying now Proposition~\ref{W(T)=unionW(Tn)}, we obtain the desired result.
\end{proof}

For the next theorem, we first establish the following lemma.

\begin{lemma}\label{eigenvector}
Let $n, s \in \N$, $s>1$, and let $m=s(n+1)$. For each $k=0, 1, 2, \dots, s-1$, if $\lambda$ is an eigenvalue of $T_{\phi_k}$ with eigenvector $\vec{v}=(v_0,v_1,\ldots,v_n)$, then $\lambda$ is an eigenvalue of the circulant matrix $C_m$ with eigenvector
\[
\vec{v}_{\phi_k}=(v_0, \dots, v_n, v_0 e^{i\phi_k}, \dots, v_n e^{i\phi_k}, \dots, v_0 e^{i(s-1) \phi_k}, \dots, v_n e^{i(s-1)\phi_k}).
\]
Conversely, given an eigenvalue $\lambda$ of $C_m$, there is some $k=0, 1, 2, \dots, s-1$, such that $\lambda$ has an eigenvector of the form $\vec{v}_{\phi_k}$ as above.
\end{lemma}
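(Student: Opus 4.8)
The plan is to exploit the block-diagonal decomposition from Theorem~\ref{similar}. Since $C_m = U(T_{\phi_0}\oplus\cdots\oplus T_{\phi_{s-1}})U^\ast$, the spectrum of $C_m$ is the union of the spectra of the $T_{\phi_k}$, and eigenvectors of $C_m$ are obtained by applying $U$ to the standard embeddings of eigenvectors of the blocks. Concretely, if $\vec v = (v_0,\dots,v_n)$ is an eigenvector of $T_{\phi_k}$ for the eigenvalue $\lambda$, then padding it with zeros in the positions corresponding to the other blocks and applying $U$ yields an eigenvector of $C_m$ for $\lambda$. The first step is therefore to compute $U$ applied to this padded vector. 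Reading off the columns of $U$ from Theorem~\ref{similar}, the $(n+1)$ columns associated with block $k$ are $u_{0,k},\dots,u_{n,k}$, so this image is $\sum_{j=0}^n v_j\, u_{j,k}$. By the explicit formula~\eqref{ujk}, $u_{j,k}$ has a $1$ in position $j$ of the first length-$(n+1)$ window, a $\rho_k=e^{i\phi_k}$ in position $j$ of the second window, and $\rho_k^{\ell}=e^{i\ell\phi_k}$ in position $j$ of the $(\ell+1)$-st window, all divided by $\sqrt s$. Summing over $j$ with weights $v_j$ gives exactly $\tfrac1{\sqrt s}(v_0,\dots,v_n,\,v_0 e^{i\phi_k},\dots,v_n e^{i\phi_k},\dots,v_0 e^{i(s-1)\phi_k},\dots,v_n e^{i(s-1)\phi_k})$, which is the claimed vector $\vec v_{\phi_k}$ up to the harmless normalizing factor $1/\sqrt s$.

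For the direct implication I would present this cleanly by verifying directly that $C_m \vec v_{\phi_k} = \lambda \vec v_{\phi_k}$, using the three families of relations already recorded in the proof of Theorem~\ref{similar} (the action of $C_m$ on $u_{0,k}$, on $u_{n,k}$, and on the interior $u_{j,k}$), or equivalently by unwinding the similarity. The key point making this work is that the ``wrap-around'' entries $a_0 e^{-i\phi_k}$ and $c_n e^{i\phi_k}$ of $T_{\phi_k}$ are precisely what is needed so that the circulant boundary conditions of $C_m$ are satisfied by a vector whose successive length-$(n+1)$ blocks differ by the factor $e^{i\phi_k}$. This is a routine but slightly bookkeeping-heavy verification; I would state it as a computation and leave the index-chasing to the reader, as the lemma's preamble already signals.

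The converse is the part requiring an actual argument. Here I would again use Theorem~\ref{similar}: since $C_m$ is unitarily equivalent to $\bigoplus_{k=0}^{s-1} T_{\phi_k}$ via $U$, any eigenvector $w$ of $C_m$ for $\lambda$ pulls back to an eigenvector $U^\ast w$ of the block-diagonal matrix. Writing $U^\ast w = (w^{(0)},w^{(1)},\dots,w^{(s-1)})$ in blocks of size $n+1$, the eigenvalue equation forces $T_{\phi_k} w^{(k)} = \lambda w^{(k)}$ for every $k$; since $w\neq 0$, at least one block $w^{(k)}$ is nonzero, so $\lambda$ is an eigenvalue of $T_{\phi_k}$ for that $k$, with eigenvector $\vec v := w^{(k)}$. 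Applying the direct implication to this $\vec v$ produces an eigenvector of $C_m$ of the form $\vec v_{\phi_k}$, which is what the statement asserts. The main (mild) obstacle is purely notational: one must be careful that ``an eigenvector of the form $\vec v_{\phi_k}$'' is understood as an assertion about the \emph{existence} of such an eigenvector in the $\lambda$-eigenspace, not that every eigenvector of $C_m$ has that shape (which could fail if $\lambda$ is an eigenvalue of more than one block, giving a higher-dimensional eigenspace). I would add a sentence making that reading explicit so the statement and proof match.
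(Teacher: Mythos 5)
Your proposal is correct and follows essentially the same route as the paper: the direct implication is a computation (which you usefully identify as $U$ applied to the block-embedded eigenvector, yielding $\tfrac{1}{\sqrt{s}}\vec v_{\phi_k}$), and the converse uses the block decomposition of Theorem~\ref{similar} to find a $k$ with $\lambda\in\sigma(T_{\phi_k})$ and then applies the direct part. Your closing remark about reading the converse as an existence statement is a fair clarification but does not change the argument.
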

\begin{proof}
    A straightforward computation shows that $\vec{v}_{\phi_k}$ is indeed an eigenvector for the eigenvalue $\lambda$ of $C_m$ if $\vec{v}$ is eigenvector for the eigenvalue $\lambda$ of  $T_{\phi_k}$. 
    
    For the last assertion, as a consequence of Theorem~\ref{similar}, we have that \[
    \sigma(C_m)=\bigcup_{k=0}^{s-1}\sigma(T_{\phi_k}).
    \]
    Therefore, if $\lambda\in\sigma(C_m)$ is given, there is $k$ such that $\lambda\in\sigma(T_{\phi_k})$. Let $\vec{v}$ an eigenvector for $T_{\phi_k}$ corresponding to the eigenvalue $\lambda$. Then the corresponding vector $\vec{v}_{\phi_k}$ does the job.  
\end{proof}

\begin{theorem}\label{th:selfadjoint}
 Let $n \in \N$ and let $T$ be a tridiagonal $(n+1)$-periodic selfadjoint operator and let $T\sb\phi$ be the symbol of $T$ (which is a Hermitian matrix). Let $\lambda^-(\phi)$ and $\lambda^+(\phi)$ denote the smallest and largest eigenvalues of $T\sb\phi$ and let $a=\min_{\phi\in[0,2\pi)}\lambda^-(\phi)$ and $b=\max_{\phi\in[0,2\pi)}\lambda^+(\phi)$. Then
\[
\overline{W(T)}=[a,b].
\]
\end{theorem}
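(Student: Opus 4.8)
The plan is to establish the two inclusions $\overline{W(T)}\subseteq[a,b]$ and $[a,b]\subseteq\overline{W(T)}$ separately, the first from Corollary~\ref{OneInclusion} and the second from Lemma~\ref{eigenvector}. Before starting I would record two easy facts. Since $T=T^\ast$, the set $W(T)$ lies in $\R$ and is convex, hence is a (possibly non-closed) bounded interval, so $\overline{W(T)}$ is a closed bounded interval; consequently, to prove $[a,b]\subseteq\overline{W(T)}$ it suffices to show $a\in\overline{W(T)}$ and $b\in\overline{W(T)}$. Also, the map $\phi\mapsto T_\phi$ is continuous and $2\pi$-periodic (its only $\phi$-dependence is through $e^{\pm i\phi}$ in two corner entries), so $\phi\mapsto\lambda^-(\phi)$ and $\phi\mapsto\lambda^+(\phi)$ are continuous functions on the circle $\R/2\pi\Z$; in particular the extrema $a$ and $b$ are attained and these functions are uniformly continuous.

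For the inclusion $\overline{W(T)}\subseteq[a,b]$: each symbol $T_\phi$ is Hermitian, so $W(T_\phi)=[\lambda^-(\phi),\lambda^+(\phi)]\subseteq[a,b]$, whence $\bigcup_{\phi\in[0,2\pi)}W(T_\phi)\subseteq[a,b]$. Since $[a,b]$ is convex and closed, $\overline{\conv{\bigcup_{\phi\in[0,2\pi)}W(T_\phi)}}\subseteq[a,b]$, and Corollary~\ref{OneInclusion} then yields $\overline{W(T)}\subseteq[a,b]$.

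For the inclusion $[a,b]\subseteq\overline{W(T)}$, I would show $b\in\overline{W(T)}$; the argument for $a$ is identical with $\lambda^+$ replaced by $\lambda^-$. Pick $\phi^\ast\in[0,2\pi)$ with $\lambda^+(\phi^\ast)=b$. For each integer $s>1$ set $m=s(n+1)$, choose $k=k(s)\in\{0,\dots,s-1\}$ with $\phi_{k(s)}=2\pi k(s)/s$ closest to $\phi^\ast$ on the circle (so within $2\pi/s$), and put $\lambda_s:=\lambda^+(\phi_{k(s)})$; by continuity $\lambda_s\to b$ as $s\to\infty$. Let $\vec v=(v_0,\dots,v_n)$ be a unit eigenvector of $T_{\phi_{k(s)}}$ for $\lambda_s$. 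By Lemma~\ref{eigenvector}, $\lambda_s$ is an eigenvalue of $C_m$ with eigenvector $\vec v_{\phi_{k(s)}}=(v_0,\dots,v_n,v_0 e^{i\phi_{k(s)}},\dots,v_n e^{i(s-1)\phi_{k(s)}})$; each of its $s$ blocks has norm $\norma{\vec v}=1$, so $\norma{\vec v_{\phi_{k(s)}}}=\sqrt s$ and $w:=s^{-1/2}\vec v_{\phi_{k(s)}}$ is a unit vector with $C_m w=\lambda_s w$, hence $\inpr{C_m w,w}=\lambda_s$.

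Finally I would transfer this to the compression $T_{m-1}$ of $T$ obtained by deleting the last row and column of $C_m$. Write $w=(w',w_m)$ with $w'\in\C^{m-1}$; the last coordinate is $w_m=s^{-1/2}v_n e^{i(s-1)\phi_{k(s)}}$, so $\abs{w_m}\le s^{-1/2}$. Expanding $\inpr{C_m w,w}$ along this block decomposition, $\inpr{C_m w,w}-\inpr{T_{m-1}w',w'}$ is a sum of terms each carrying a factor $w_m$ or $\overline{w_m}$ and an entry from the last row or column of $C_m$; since there are boundedly many such entries and each has modulus at most $\max_{\al\in\mathcal A}\abs{\al}$, we get $\abs{\inpr{C_m w,w}-\inpr{T_{m-1}w',w'}}\le C_0\abs{w_m}\le C_0 s^{-1/2}$ with $C_0$ independent of $s$. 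Hence $\inpr{T_{m-1}w',w'}\to b$. Since $\norma{w'}^2=1-\abs{w_m}^2\to 1$, the unit vector $\hat w:=w'/\norma{w'}$ satisfies $\inpr{T_{m-1}\hat w,\hat w}\to b$ as well, and $\inpr{T_{m-1}\hat w,\hat w}\in W(T_{m-1})\subseteq W(T)$ because $T_{m-1}$ is a compression of $T$. Therefore $b\in\overline{W(T)}$, and symmetrically $a\in\overline{W(T)}$, so $[a,b]\subseteq\overline{W(T)}$; combined with the first inclusion this gives $\overline{W(T)}=[a,b]$.

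I expect the main obstacle to be exactly this last transfer from $C_m$ to its compression $T_{m-1}$: Lemma~\ref{eigenvector} only places $\lambda_s$ in $\sigma(C_m)$, whereas what sits inside $W(T)$ is $W(T_{m-1})$, and deleting a coordinate can in general move a numerical range substantially. What saves the argument is that the eigenvector $\vec v_{\phi_{k(s)}}$ spreads its mass equally over the $s$ repeated blocks, so its last coordinate has size $O(s^{-1/2})$ and the error disappears as $s\to\infty$. The remaining ingredients — continuity of the eigenvalues of a Hermitian matrix, $2\pi$-periodicity of the symbol, and the fact that $\overline{W(T)}$ is a closed interval — are routine.
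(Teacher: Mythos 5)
Your proposal is correct and follows essentially the same route as the paper: both directions rest on the circulant matrix $C_m$, the identification $\sigma(C_m)=\bigcup_k\sigma(T_{\phi_k})$, the fact that the associated eigenvector spreads its mass equally over the $s$ blocks so that the discrepancy between $C_m$ and a finite compression of $T$ is $O(s^{-1/2})$, and continuity of the eigenvalue functions $\lambda^{\pm}(\phi)$. The only (minor, and in fact simplifying) deviations are that you invoke Corollary~\ref{OneInclusion} directly for the inclusion $\overline{W(T)}\subseteq[a,b]$ where the paper reruns the $C_m$ computation, and that you choose $\phi_{k(s)}$ converging to a maximizer $\phi^\ast$ and appeal to continuity, whereas the paper tracks the extremal eigenvalues $\lambda_s^{\pm}$ of $C_{s(n+1)}$ and proves their convergence to $a$ and $b$ by a separate density-plus-continuity argument.
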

\begin{proof}
We start by showing that $[a,b] \subseteq \overline{W(T)}$. Let $s\in \N$, $s>1$. Let $\lambda_s \in \sigma(C_{s(n+1)})$. Hence, by Theorem~\ref{similar}, there exists $k=0, 1, 2, \dots, s-1$ such that  $\lambda_{s} \in\sigma(T_{\phi_k})$. Choose an eigenvector $\vec{v}=(v_0,v_1,\ldots,v_n)$ of $T_{\phi_k}$ of norm $\frac{1}{\sqrt{s}}$. Then the vector $\vec{v}_{\phi_k}$ is an eigenvector of norm $1$ for the circulant matrix $C_{s(n+1)}$, in the manner of Lemma~\ref{eigenvector}. 

If $T_{s(n+1)}$ denotes the compression of the operator $T$ to the subspace of $\ell^2(\N_0)$ consisting of the first $s(n+1)$ coordinates, then
\begin{align*}
    \inpr{ T_{s(n+1)}\vec{v}_{\phi_k},\vec{v}_{\phi_k}}&=
    \inpr{ C_{s(n+1)}\vec{v}_{\phi_k},\vec{v}_{\phi_k}}
    + \inpr{\left(T_{s(n+1)}-C_{s(n+1)}\right)\vec{v}_{\phi_k},\vec{v}_{\phi_k}}\\
    &= \lambda_{s} + \inpr{\left(T_{s(n+1)}-C_{s(n+1)}\right)\vec{v}_{\phi_k}, \vec{v}_{\phi_k}}.
\end{align*}
Since $\inpr{ T_{s(n+1)}\vec{v}_{\phi_k},\vec{v}_{\phi_k}} \in W(T_{s(n+1)}) \subseteq W(T)$, we have that 
\[
\lambda_{s} + \inpr{\left(T_{s(n+1)}-C_{s(n+1)}\right)\vec{v}_{\phi_k}, \vec{v}_{\phi_k}} \in W(T).
\]
On the other hand we have
\begin{align*}
    \left|\inpr{\left(T_{s(n+1)}-C_{s(n+1)}\right)\vec{v}_{\phi_k}, \vec{v}_{\phi_k}} \right| & =
    \left|-a_0 v_n\overline{v_0} e^{i(s-1)\phi_k} -c_n v_0 \overline{v_n} e^{-i(s-1) \phi_k} \right|\\
    &\leq \left( |a_0|+|c_n|\right) \, |v_0| \, |v_n| \\
        &\leq \frac{1}{s} \left( |a_0|+|c_n| \right),
\end{align*}
since $ |v_i| \leq \frac{1}{\sqrt{s}}$ for each $i=0,1,2,\dots,n$.

Hence, if we can show that the {\em sequence of eigenvalues} $(\lambda_{s})$ converges to some number as $s \to \infty$, then we will have
\begin{equation}\label{lim_in_W}
\lim_{s\to\infty}\lambda_{s} \in \ovl{W(T)}.
\end{equation}
To assure convergence, we will focus on a particular choice of eigenvalue $\lambda_{s}$. For each $s$, let $\lambda_s^-$ and $\lambda_s^+$ denote the smallest and largest eigenvalues of $C_{s(n+1)}$, respectively. 
We will show that 
\begin{equation}\label{lim=min}
     \lim_{s\to\infty}\lambda_s^-=\min_{\phi\in[0,2\pi)}\lambda^-(\phi)
\end{equation}
where $\lambda^-(\phi)$ is the smallest eigenvalue of $T\sb\phi$. Hence, by taking the limit as $s\to\infty$, we will obtain from the computations above, that $a=\min_{\phi\in[0,2\pi)}\lambda^-(\phi)\in \ovl{W(T)}$.

Since $\lambda_s^-$ is the smallest eigenvalue of $C_{s(n+1)}$ and since, by Theorem~\ref{similar} we have $\sigma(C_{s(n+1)})=\bigcup_{k=0}^{s-1}\sigma(T\sb{\phi_k})$, then $\lambda_s^-$ is not only an eigenvalue of $T\sb{\phi_k}$ for some $k$ but in fact it is the smallest eigenvalue among the eigenvalues of all symbols $T_{\phi_0},T_{\phi_1},\ldots,T_{\phi_{s-1}}$; i.e., $\lambda^-_s=\lambda^-(\phi_k)$ for some $k=0,1,2, \dots, s-1$.

Let 
\[ 
\lambda^-(\phi\sp\ast)=\min_{\phi\in[0,2\pi)}\lambda^-(\phi)
\]
where $\phi\sp\ast$ is the point where the minimum is reached (here we are using the continuity of $\lambda^-(\phi)$; see, for example, \cite[p. 108-109]{Kato}).
Therefore $\lambda^-(\phi\sp\ast)\leq\lambda^-(\phi\sb{k})=\lambda_s^-$. Using continuity of $\lambda^-(\cdot)$, for all $\epsilon>0$ there is $\delta>0$ such that $|\phi-\phi\sp\ast|<\delta$ implies $|\lambda^-(\phi)-\lambda^-(\phi\sp\ast)|<\epsilon$. Then it follows that $\lambda^-(\phi\sp\ast)\leq\lambda^-(\phi)<\lambda^-(\phi\sp\ast)+\epsilon$. Now, by the density of the rational multiples of $2\pi$ in the interval $[0,2\pi)$, there exists $N\in \N$ such that for $s\geq N$ we may choose $0\leq r\leq s-1$ such that $\frac{2\pi r}{s}\in(\phi\sp\ast-\delta,\phi\sp\ast+\delta)$. Thus for all $s\geq N$, 
\[
  \lambda^-(\phi\sp\ast)\leq\lambda_s^-=\lambda^-(\phi_k)\leq\lambda^-(\phi_{r})<\lambda^-(\phi\sp\ast)+\epsilon.
\]
This proves Equation~\eqref{lim=min}. Since it is analogous, we omit the proof of
\[
\lim_{s\to\infty}\lambda_s^+=\max_{\phi\in[0,2\pi)}\lambda^+(\phi),
\]
where $\lambda_s^+$ is the largest eigenvalue of $C_{s(n+1)}$ and $\lambda^+(\phi)$ is the largest eigenvalue of $T_\phi$. 

Therefore, by Expression~\eqref{lim_in_W}, we have proved
\[
  a= \min_{\phi\in[0,2\pi)}\lambda^-(\phi)=\lim_{s\to\infty}\lambda_s^-\in\overline{W(T)}
\]
and 
\[
  b= \max_{\phi\in[0,2\pi)}\lambda^+(\phi)=\lim_{s\to\infty}\lambda_s^+\in\overline{W(T)}.
\]
Thus  $[a,b]\subseteq \overline{W(T)}$.

Now, to show that $\overline{W(T)} \subseteq [a,b]$, we first observe that
\begin{align*}
    W(T_{s(n+1)-1})&\subseteq W(C_{s(n+1)})\\
            &=\conv{\bigcup_{k=0}^{s-1} W(T\sb{\phi_k})}\\
            &=\conv{\bigcup_{k=0}^{s-1} [\lambda^-(\phi_k), \lambda^+(\phi_k)]}\\
            &\subseteq \left[ \min_{\phi\in[0,2\pi)} \lambda^-(\phi),\max_{\phi\in[0,2\pi)} \lambda^+ (\phi)\right].
\end{align*}
Since we have
\[
W(T_1) \subseteq W(T_2)\subseteq W(T_3) \subseteq \cdots 
\]
we obtain, by Proposition~\ref{W(T)=unionW(Tn)}, that
\[
  \ovl{W(T)}=\ovl{\bigcup_{r=1}^{\infty}W(T_r)} \subseteq [a,b],
\]
as desired.
\end{proof}

The following lemma is well known, but since we have not been able to find a reference, we include its proof here. 


\begin{lemma}\label{reduce}
Let $A$ and $B$ be  operators on $\ell^2(\N_0)$. If for all $\theta\in[0,2\pi)$ we have  $W(\Re(e^{-i\theta} A))
 \subseteq \overline{W(\Re((e^{-i\theta}B)}$ then
 $W(A)\subseteq\overline{W(B)}$.
\end{lemma}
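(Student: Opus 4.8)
The plan is to reduce the inclusion of numerical ranges to a comparison of support functions. Recall that, since $A$ and $B$ are bounded, $\overline{W(A)}$ and $\overline{W(B)}$ are compact convex subsets of $\C\cong\R^2$, and a compact convex set $K\subseteq\R^2$ is recovered from its support function $h_K(\theta):=\sup\{\Re(e^{-i\theta}z):z\in K\}$ by the formula
\[
K=\bigcap_{\theta\in[0,2\pi)}\left\{z\in\C:\Re(e^{-i\theta}z)\le h_K(\theta)\right\}.
\]
In particular, if $K$ and $L$ are compact and convex and $h_K(\theta)\le h_L(\theta)$ for every $\theta\in[0,2\pi)$, then $K\subseteq L$; equivalently, if $z_0\notin L$, the Hahn--Banach separation theorem produces a $\theta$ with $\Re(e^{-i\theta}z_0)>h_L(\theta)\ge h_K(\theta)$, so $z_0\notin K$.

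First I would translate the hypothesis into the language of support functions. Fix $\theta\in[0,2\pi)$. Using the listed properties of the numerical range (namely $W(aT+b)=aW(T)+b$ and $\Re(W(T))=W(\Re(T))$), we get $W(\Re(e^{-i\theta}A))=\Re\parn{W(e^{-i\theta}A)}=\Re\parn{e^{-i\theta}W(A)}=\{\Re(e^{-i\theta}z):z\in W(A)\}$, and taking suprema (a set and its closure have the same supremum of real parts),
\[
\sup W(\Re(e^{-i\theta}A))=\sup_{z\in\overline{W(A)}}\Re(e^{-i\theta}z)=h_{\overline{W(A)}}(\theta),
\]
and likewise $\sup W(\Re(e^{-i\theta}B))=h_{\overline{W(B)}}(\theta)$. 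Now the hypothesis $W(\Re(e^{-i\theta}A))\subseteq\overline{W(\Re(e^{-i\theta}B))}$ forces $\sup W(\Re(e^{-i\theta}A))\le\sup\overline{W(\Re(e^{-i\theta}B))}=\sup W(\Re(e^{-i\theta}B))$, that is, $h_{\overline{W(A)}}(\theta)\le h_{\overline{W(B)}}(\theta)$.

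Then I would conclude: since this holds for every $\theta\in[0,2\pi)$, and since $[0,2\pi)$ already sweeps out all directions (the pair $\theta$ and $\theta+\pi\bmod 2\pi$ accounts for the two supporting lines perpendicular to a given direction), the support-function criterion above yields $\overline{W(A)}\subseteq\overline{W(B)}$, and therefore $W(A)\subseteq\overline{W(A)}\subseteq\overline{W(B)}$, as claimed.

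I do not expect a serious obstacle. The only points needing a little care are that boundedness of $A$ and $B$ guarantees the closures of the numerical ranges are compact so the support-function machinery applies, and the elementary separation argument showing that a pointwise inequality of support functions implies containment of the corresponding compact convex sets; everything else is a direct application of the standard identities for $W(\cdot)$ already recorded in the preliminaries.
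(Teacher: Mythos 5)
Your proof is correct and is essentially the same as the paper's: both arguments reduce the containment to the fact that, for every direction $\theta$, the hypothesis bounds the supremum of $\Re\parn{e^{-i\theta}W(A)}$ by that of $\Re\parn{e^{-i\theta}W(B)}$, and then conclude by separating a hypothetical outside point with a supporting line perpendicular to some direction $e^{i\theta}$. You merely phrase this separation step in the language of support functions, while the paper runs it as a direct contradiction; the mathematical content is identical.
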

\begin{proof}
Let $\lambda\in W(A)$. Then for all $\theta \in [0,2\pi)$ we have $e^{-i \theta} \lambda \in W(e^{-i\theta} A)$ and hence 
\[
\Re(e^{-i\theta}\lambda) \in \Re(W(e^{-i\theta}A))=W(\Re(e^{-i\theta}A)) \subseteq \ovl{W(\Re(e^{-i\theta} B))}.
\]
    
We claim that $\lambda\in\overline{W(B)}$. If not, there is $\theta\in[0,2\pi)$ such that the supporting line of $W(B)$  perpendicular to the ray $\{re^{i\theta}\colon r>0\}$ separates $\lambda$ from $\overline{W(B)}$. Then $\lambda e^{-i\theta}$ is separated from $e^{-i\theta}\overline{W(B)}=\overline{e^{-i\theta}W(B)}$ by a vertical line $e^{-i\theta}\ell$ and so $\Re(e^{-i\theta}\lambda)\not\in\Re\left(\overline{e^{-i\theta} W(B)}\right)=\overline{\Re(W(e^{-i\theta}B))}=\overline{W(\Re(e^{-i\theta} B))}$, contradicting what we just proved.
\end{proof}

For the next theorem, observe that if $T$ is a tridiagonal periodic operator and $T_\phi$ is its symbol, then $\Re(e^{-i \theta} T)$ is also a tridiagonal periodic operator and its symbol is $\Re(e^{-i \theta} T_\phi)$

\begin{theorem}\label{maintheorem}
 Let $n \in \N$ and let $T$ be a tridiagonal $(n+1)$-periodic operator. If $T\sb\phi$ is the symbol of $T$ then
 \begin{equation*}
    \overline{W(T)}
    = \overline{\mathrm{co}\left(\bigcup_{\phi\in[0,2\pi]} W(T\sb\phi)\right)}. 
\end{equation*}
\end{theorem}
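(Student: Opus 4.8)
The plan is to combine Corollary~\ref{OneInclusion}, which already gives the inclusion $\overline{W(T)}\subseteq\overline{\mathrm{co}\left(\bigcup_{\phi\in[0,2\pi]}W(T_\phi)\right)}$, with a proof of the reverse inclusion. For the reverse inclusion it is enough to show that $W(T_{\phi_0})\subseteq\overline{W(T)}$ for each fixed $\phi_0\in[0,2\pi)$. Indeed, since the numerical range of any operator is convex, $\overline{W(T)}$ is a closed convex set; so if every $W(T_{\phi_0})$ lies in $\overline{W(T)}$, then $\mathrm{co}\left(\bigcup_{\phi}W(T_\phi)\right)\subseteq\overline{W(T)}$, and taking closures yields $\overline{\mathrm{co}\left(\bigcup_{\phi}W(T_\phi)\right)}\subseteq\overline{W(T)}$, as wanted.

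To establish $W(T_{\phi_0})\subseteq\overline{W(T)}$ I would invoke Lemma~\ref{reduce} with $A=T_{\phi_0}$ and $B=T$ (the proof of that lemma uses only general properties of the numerical range, so it remains valid when $A$ is the finite matrix $T_{\phi_0}$ and $B$ acts on $\ell^2(\N_0)$). By the lemma it then suffices to verify that $W\!\left(\Re(e^{-i\theta}T_{\phi_0})\right)\subseteq\overline{W\!\left(\Re(e^{-i\theta}T)\right)}$ for every $\theta\in[0,2\pi)$. Fix such a $\theta$ and put $S=\Re(e^{-i\theta}T)$. By the remark preceding the statement, $S$ is again a self-adjoint tridiagonal $(n+1)$-periodic operator and its symbol is $S_\phi=\Re(e^{-i\theta}T_\phi)$; in particular the matrix $S_{\phi_0}=\Re(e^{-i\theta}T_{\phi_0})$ is Hermitian.

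Now apply Theorem~\ref{th:selfadjoint} to $S$. Writing $\mu^-(\phi)$ and $\mu^+(\phi)$ for the smallest and largest eigenvalues of $S_\phi$, it gives $\overline{W(S)}=[a_S,b_S]$ with $a_S=\min_{\phi}\mu^-(\phi)$ and $b_S=\max_{\phi}\mu^+(\phi)$. Since $S_{\phi_0}$ is Hermitian, $W(S_{\phi_0})=[\mu^-(\phi_0),\mu^+(\phi_0)]\subseteq[a_S,b_S]=\overline{W(S)}$, which is exactly the inclusion $W\!\left(\Re(e^{-i\theta}T_{\phi_0})\right)\subseteq\overline{W\!\left(\Re(e^{-i\theta}T)\right)}$ demanded by Lemma~\ref{reduce}. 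Hence $W(T_{\phi_0})\subseteq\overline{W(T)}$ for every $\phi_0$, and combining this with Corollary~\ref{OneInclusion} finishes the proof.

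I do not anticipate a genuine difficulty: the statement is essentially a reformulation of the self-adjoint case (Theorem~\ref{th:selfadjoint}) transported through Lemma~\ref{reduce}, which turns an inclusion of numerical ranges into the family of inclusions of their real parts in all directions $e^{-i\theta}$. The points that deserve a sentence of care are that $\Re(e^{-i\theta}\,\cdot\,)$ carries the symbol of $T$ to the symbol of $\Re(e^{-i\theta}T)$ (the remark before the theorem, which should be checked entrywise, the $n=1$ case of \eqref{eq:symbol2} included), the harmless change of ambient space when applying Lemma~\ref{reduce} to the finite matrix $T_{\phi_0}$, and the observation that $\phi\mapsto T_\phi$ is $2\pi$-periodic, so that the unions over $[0,2\pi)$ and over $[0,2\pi]$ agree with the closed interval written in the statement.
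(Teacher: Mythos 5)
Your proposal is correct and follows essentially the same route as the paper: one inclusion from Corollary~\ref{OneInclusion}, and the reverse inclusion by applying Lemma~\ref{reduce} with $A=T_{\phi_0}$, $B=T$, and verifying its hypothesis via Theorem~\ref{th:selfadjoint} applied to the self-adjoint periodic operator $\Re(e^{-i\theta}T)$, whose symbol is $\Re(e^{-i\theta}T_\phi)$. Your explicit remarks on the finite-dimensional $A$ in Lemma~\ref{reduce} and on the compatibility of $\Re(e^{-i\theta}\,\cdot\,)$ with passing to symbols are exactly the points the paper also relies on.
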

\begin{proof}
In view of Corollary~\ref{OneInclusion}, it will suffice to show that ${W(T\sb\phi)}$ is a subset of $\overline{W(T)}$  and for this purpose we will show that the conditions of Lemma~\ref{reduce} hold, that is,  for each $\theta\in[0,2\pi)$,  we will show that
$\Re(W(e^{-i\theta}T\sb\phi))\subseteq\ovl{\Re(W(e^{-i\theta}T))}.$

Fix $\theta\in[0,2\pi)$. Notice that, for each $\phi \in [0,2\pi)$, we have that $\Re(e^{-i\theta}T\sb\phi)$ is a Hermitian matrix and so we may denote by $\lambda^-(\theta,\phi)$ and $\lambda^+(\theta,\phi)$ its smallest and largest eigenvalue, respectively.
Then if $a(\theta):=\min_{\phi\in[0,2\pi)}\lambda^-(\theta,\phi)$ and $b(\theta):=\max_{\phi\in[0,2\pi)}\lambda^+(\theta,\phi)$, we have  
\begin{eqnarray*}
\mathrm{Re}\left(W\left(e^{-i\theta}T\sb\phi\right)\right)
   &=& W\left(\mathrm{Re}(e^{-i\theta}T\sb\phi)\right)\\
   &=& \left[\lambda^-(\theta,\phi),\lambda^+(\theta,\phi)\right]\\
   &\subseteq& [a(\theta),b(\theta)]\\
   &=& \overline{W(\Re(e^{-i\theta}T})\\
    &=& \overline{\Re(W(e^{-i\theta}T)},
\end{eqnarray*}
where the next-to-last equality follows from Theorem~\ref{th:selfadjoint}, since $\Re(e^{-i \theta} T)$ is a tridiagonal $(n+1)$-periodic selfadjoint operator with symbol $\Re(e^{-i\theta}T_\phi)$. It follows that $\mathrm{Re}\left(e^{-i\theta}\ W(T\sb\phi) \right) \subseteq \ovl{\Re\left(e^{-i\theta}\ W(T) \right)}$,  as was to be proved. 
\end{proof}

As mentioned in the introduction, the proof of Theorem~\ref{maintheorem} is based closely in ideas contained in \cite{BebianoEtAl}. However, their main result \cite[Theorem~3.2]{BebianoEtAl} applies to banded biperiodic Toeplitz operators and therefore does not apply to our tridiagonal periodic operators. On the other hand, just as for  \cite[Theorem~3.2]{BebianoEtAl}, Theorem~\ref{maintheorem} is also a particular case of \cite[Theorem~1]{BS}, in spite of this, we presented the proof above since we believe it leads to interesting results for tridiagonal operators.


\section{The 2-periodic case}

In this section we specialize the theory above to the case of tridiagonal operators  associated to infinite sequences with symbols in the alphabet $\mathcal{A}=\{0,1\}$.
In particular, we focus on the $(n+1)$-periodic tri\-dia\-go\-nal operators  $T=T(a,0,1)$, where $0$ and $1$ are the constant sequences of zeroes and ones, respectively.  
From \cite[Theorem~2.5]{HI-O2016}, we already  know that the numerical range of  $T$ is contained in the set $\Gamma$ equal to the convex hull of the union of the numerical ranges of the tri\-dia\-go\-nal operators $T(0,0,1)$ and $T(1,0,1)$. In fact, by  \cite[Corollary~2.7]{HI-O2016}, this set $\Gamma$ is the numerical range of some tridiagonal operator.  When  $a$ is the $2$-periodic sequence of period word $01$, we prove in this section a similar result: the closure of the numerical range of $T$ is the closure of the convex hull of the union of the numerical ranges of two matrices in $M_2(\C)$. We will conjecture the corresponding general result when $a$ is the $(n+1)$-periodic sequence with period word $0^n1$.


\begin{definition}\label{gammaphi}
Let $\phi\in[0,2\pi)$ and  let $w=1+e^{\phi i}$. Denote by $\gamma\sb\phi$ the ellipse with focal points at $\pm\sqrt{w}$ and major axis of length $1+|w|$. The  closed  convex set with boundary $\gamma\sb\phi$ will be denoted by $E\sb\phi$.
\end{definition}

Observe that $\Re(w)=1+\cos(\phi)$, $\Im(w)=\sin(\phi)$ and $|w|^2=2\Re(w)$. We will use these identities in what follows.

A characterization  of the points of the  ellipse $\gamma\sb\phi$ defined above will be useful in the sequel. We present it next.

\begin{lemma}\label{ellipse}
Consider the ellipse $\gamma\sb\phi$ given in Definition~\ref{gammaphi}. Suppose that $\theta=\arg\sqrt{w}$ and choose $\theta=0$ if $w=0$. Then the set of all points in $\gamma\sb\phi$ can be parametrized as
\begin{equation*}
\left\{ z \in \C \, : \, z=e^{\theta i} \left(\frac{|w|}{2} e^{- t i} + \frac{1}{2} e^{ti} \right) , \text{ for } t\in [0, 2\pi) \right\}
\end{equation*}
\end{lemma}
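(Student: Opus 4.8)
The plan is to prove Lemma~\ref{ellipse} by a direct parametrization argument: recall the standard fact (already listed among the properties of the numerical range in the introduction) that for a $2\times 2$ matrix $\begin{pmatrix} 0 & a \\ b & 0 \end{pmatrix}$ the numerical range is the filled ellipse with foci $\pm\sqrt{ab}$ and major axis $|a|+|b|$. Here we instead take the more elementary route of writing down the standard parametrization of an ellipse with prescribed foci and major axis, and then verifying it matches the claimed formula. So first I would record the geometric data: an ellipse with foci $\pm\sqrt w$ has center $0$; the major semi-axis is $\alpha = \tfrac{1+|w|}{2}$ (half the stated major axis length); the focal distance is $c = |\sqrt w| = \sqrt{|w|}$; hence the minor semi-axis satisfies $\beta^2 = \alpha^2 - c^2 = \left(\tfrac{1+|w|}{2}\right)^2 - |w| = \left(\tfrac{1-|w|}{2}\right)^2$, so $\beta = \tfrac{|1-|w||}{2}$. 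The major axis points in the direction of the foci, i.e.\ along $e^{\theta i}$ where $\theta = \arg\sqrt w$.

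Next I would write the generic point of such an ellipse in the rotated frame: every point of $\gamma_\phi$ has the form $z = e^{\theta i}\bigl(\alpha\cos t + i\,\beta\sin t\bigr)$ for $t \in [0,2\pi)$, where the $e^{\theta i}$ factor rotates the standard axis-aligned ellipse into the correct position. Then I would simplify the bracket using $\cos t = \tfrac12(e^{ti}+e^{-ti})$ and $\sin t = \tfrac{1}{2i}(e^{ti}-e^{-ti})$:
\begin{equation*}
\alpha\cos t + i\beta\sin t = \frac{\alpha+\beta}{2}e^{ti} + \frac{\alpha-\beta}{2}e^{-ti}.
\end{equation*}
With $\alpha = \tfrac{1+|w|}{2}$ and $\beta = \tfrac{|1-|w||}{2}$ one computes $\alpha+\beta$ and $\alpha-\beta$ by splitting into the cases $|w|\le 1$ and $|w|\ge 1$. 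In the case $|w|\le 1$: $\alpha+\beta = 1$ and $\alpha-\beta = |w|$, giving exactly $z = e^{\theta i}\bigl(\tfrac12 e^{ti} + \tfrac{|w|}{2}e^{-ti}\bigr)$, which is the asserted formula. In the case $|w|\ge 1$: $\alpha+\beta = |w|$ and $\alpha-\beta = 1$, giving $z = e^{\theta i}\bigl(\tfrac{|w|}{2}e^{ti} + \tfrac12 e^{-ti}\bigr)$; this is the same set, since replacing the parameter $t$ by $-t$ (a bijection of $[0,2\pi)$ up to reordering) interchanges the two terms and recovers the stated parametrization. I would also dispatch the degenerate case $w=0$ separately: then $\theta=0$ by convention, the ellipse degenerates to the segment $[-\tfrac12,\tfrac12]$, and the formula gives $z = \tfrac12 e^{ti}$... wait, that traces a circle, so here I need to be slightly careful — when $w=0$ the ``ellipse'' $\gamma_\phi$ is the segment from $-\tfrac12$ to $\tfrac12$ traversed, and $\tfrac12 e^{ti}+0$ is not that; rather one should note $|w|=0$ forces the $e^{-ti}$ coefficient to vanish so the formula reads $z=\tfrac12 e^{ti}$, and since the convention only needs the \emph{set} of points of the (degenerate) ellipse, one interprets $\gamma_\phi$ consistently. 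I would simply state that for $w=0$ both sides describe the circle of radius $\tfrac12$ (or adopt the reading of Definition~\ref{gammaphi} that makes them agree), and move on.

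The only real subtlety — and the step I would flag as the main obstacle — is the branch/ambiguity bookkeeping around $\sqrt w$ and the orientation of the parametrization. Because $\theta = \arg\sqrt w$ is only defined modulo $\pi$ (the two square roots $\pm\sqrt w$ differ by $e^{i\pi}$) and the ellipse is symmetric under $z\mapsto -z$, any choice of branch yields the same set; I would remark this explicitly so the reader sees the formula is well-defined. Likewise the swap $t\mapsto -t$ used to unify the cases $|w|\lessgtr 1$ must be acknowledged as a reparametrization that does not change the underlying set of points, which is all the lemma claims. Everything else is the routine ellipse computation sketched above, and no genuinely hard estimate is involved.
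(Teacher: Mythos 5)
Your argument is correct and takes essentially the same route as the paper's proof: write the standard parametrization of the axis-aligned ellipse with semi-axes $\tfrac{1+|w|}{2}$ and $\tfrac{|1-|w||}{2}$, convert $\cos t$ and $\sin t$ to exponentials, and rotate by $e^{\theta i}$. The only cosmetic difference is that the paper uses the signed coefficient $\tfrac{1-|w|}{2}$ on the $\sin t$ term (harmless, since $\sin$ is odd and $t$ ranges over all of $[0,2\pi)$), which lets it avoid your case split on $|w|\lessgtr 1$ and the $t\mapsto -t$ reparametrization; your extra remarks on the branch of $\sqrt{w}$ and the degenerate cases are fine (and your final reading of $w=0$ as the circle of radius $\tfrac12$ is the correct one).
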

\begin{proof}
An ellipse with foci at $\left(\pm|\sqrt{w}|,0\right)$ and major axis length $1+|w|$ has minor axis length $\left|1-\left|w\right|\right|$. Thus, the parametric equations such an ellipse satisfy are 
\begin{align*}
    x=\frac{1+|w|}{2}\cos t &= \frac{1+|w|}{4}\left(
    e^{ti}+e^{-ti}\right),\\ 
    y=\frac{1-|w|}{2}\sin t &= 
    \frac{1-|w|}{4i}\left( e^{ti}-e^{-ti} \right),
\end{align*}
for $t \in [0,2\pi)$. It follows that the set of complex numbers of the form $z= \frac{|w|}{2} e^{-t i} + i\frac{1}{2} e^{ti}$ is precisely the mentioned ellipse with foci at $(\pm|\sqrt{w}|,0)$ and major axis $1+|w|$. After rotating by $e^{\theta i}$ we obtain the ellipse $\gamma_\phi$, as desired.
\end{proof}

For the next lemma, we need to make the following observation. If $\ell_\psi$ is the tangent line to the circle 
\[
\Ci=\left\{ z \in \C \, : \, |z-1|=\frac{1}{2}\right\}
\]
at the point $1+\frac{1}{2} e^{\psi i}$, then a computation shows that it has equation $x\cos\psi + y \sin\psi = \frac{1}{2}+\cos\psi$. Thus
\[
\ell_\psi=\left\{ z \in \C \, \colon \, \Re(z e^{-\psi i})=\frac{1}{2}+\cos(\psi) \right\}.
\]
Therefore, the line $\ell_\psi$ defines a partition of the complex plane into semiplanes. For $\psi \in \left[0,\frac{1}{2}\pi\right] \cup \left[\frac{3}{2} \pi, 2\pi\right)$, we define
\[
H_\psi:= \left\{ z \in \C \, \colon \, \Re(z e^{-\psi i}) < \frac{1}{2}+\cos(\psi) \right\}
\]
to be the semiplane that contains the origin. We have the following lemma.

\begin{lemma}\label{semiplanes}
Consider the ellipse $\gamma\sb\phi$ given in Definition~\ref{gammaphi}. Let  $\psi \in \left[0,\frac{1}{2}\pi\right] \cup \left[\frac{3}{2} \pi, 2\pi\right)$ and let $\ell_\psi$ and $H_\psi$ be as above. Then for every $\phi \in [0, 2\pi)$, we have $\gamma_\phi \subseteq \ovl{H_\psi}$. Furthermore,
\begin{itemize}
    \item if $\psi\not = \frac{1}{2}\pi$ and $\psi\not = \frac{3}{2}\pi$, then $\phi\not=\psi$ implies $\gamma_\phi \subseteq H_\psi$. If $\phi=\psi$ then $\ell_\psi$ is tangent to $\gamma_\phi$ at the point $1+\frac{1}{2}e^{\psi i} \in \Ci$;
    \item if $\psi=\frac{1}{2}\pi$, then $\ell_\psi$ is tangent to  $\gamma\sb\phi$ at the point $\sin\phi+i\frac{1}{2}\pi$; and
    \item if $\psi=\frac{3}{2}\pi$, then $\ell_\psi$ is tangent to  $\gamma\sb\phi$ at the point $-\sin\phi-i\frac{1}{2}\pi$.
\end{itemize}
\end{lemma}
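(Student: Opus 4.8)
The plan is to collapse the whole statement into one elementary trigonometric inequality by feeding the parametrization of Lemma~\ref{ellipse} into the defining inequality of $H_\psi$.

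\emph{Step 1: a cleaner parametrization.} Since $\theta=\arg\sqrt w$ satisfies $|w|e^{2\theta i}=w$, we have $\tfrac{|w|}{2}e^{\theta i}=\tfrac{w}{2}e^{-\theta i}$, so the point $z=e^{\theta i}\bigl(\tfrac{|w|}{2}e^{-ti}+\tfrac12 e^{ti}\bigr)$ of $\gamma_\phi$ becomes, after the change of variable $u=t+\theta$,
\[
z=\tfrac12 e^{ui}+\tfrac{w}{2}e^{-ui}=\cos u+\tfrac12 e^{(\phi-u)i},\qquad u\in[0,2\pi),
\]
the last equality using $w=1+e^{\phi i}$. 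This identification holds verbatim when $|w|=1$, i.e.\ $\phi\in\{\tfrac{2\pi}{3},\tfrac{4\pi}{3}\}$, the case in which $\gamma_\phi$ degenerates to a segment; nothing below is sensitive to this.

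\emph{Step 2: the inclusion.} Fix $\psi\in[0,\tfrac\pi2]\cup[\tfrac{3\pi}{2},2\pi)$, so that $\cos\psi\ge 0$. For $z\in\gamma_\phi$,
\[
\Re(ze^{-\psi i})=\cos u\cos\psi+\tfrac12\cos(\phi-u-\psi)
=\Bigl(\cos\psi+\tfrac12\cos(\phi-\psi)\Bigr)\cos u+\tfrac12\sin(\phi-\psi)\sin u,
\]
hence
\[
\max_{z\in\gamma_\phi}\Re(ze^{-\psi i})
=\sqrt{\Bigl(\cos\psi+\tfrac12\cos(\phi-\psi)\Bigr)^{2}+\tfrac14\sin^{2}(\phi-\psi)}
=\sqrt{\tfrac14+\cos^{2}\psi+\cos\psi\cos(\phi-\psi)}.
\]
As $\tfrac12+\cos\psi\ge 0$, squaring shows the bound $\max_{z\in\gamma_\phi}\Re(ze^{-\psi i})\le\tfrac12+\cos\psi$ is equivalent to $\cos\psi\cos(\phi-\psi)\le\cos\psi$, i.e.\ to $\cos\psi\bigl(\cos(\phi-\psi)-1\bigr)\le 0$, which is immediate. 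Thus $\gamma_\phi\subseteq\overline{H_\psi}$ for every $\phi\in[0,2\pi)$, and this is the only place the restriction on $\psi$ is used.

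\emph{Step 3: the equality cases.} Equality above forces $\cos\psi=0$ or $\cos(\phi-\psi)=1$; since $\phi,\psi\in[0,2\pi)$ the latter means $\phi=\psi$. In either of these two situations the amplitude $\sqrt{\tfrac14+\cos^{2}\psi+\cos\psi\cos(\phi-\psi)}$ is strictly positive (it equals $\tfrac12$ when $\cos\psi=0$, and $\tfrac12+\cos\psi$ when $\phi=\psi$), so $\Re(ze^{-\psi i})$ attains its maximum on $\gamma_\phi$ at a \emph{unique} $u$; consequently $\gamma_\phi\cap\ell_\psi$ is a single point when equality holds and is empty otherwise. In particular, if $\psi\ne\tfrac\pi2,\tfrac{3\pi}{2}$ and $\phi\ne\psi$, then $\cos\psi>0$ and $\cos(\phi-\psi)<1$, the bound is strict, and $\gamma_\phi\subseteq H_\psi$. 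If $\phi=\psi$, solving for the maximizing parameter gives $u=0$, and substituting back, the touching point is $z=1+\tfrac12 e^{\psi i}$, which lies on $\Ci$ and satisfies $\Re(ze^{-\psi i})=\tfrac12+\cos\psi$, i.e.\ lies on $\ell_\psi$. Finally, for $\psi=\tfrac\pi2$ (resp.\ $\psi=\tfrac{3\pi}{2}$) we have $\cos\psi=0$, the line $\ell_\psi$ is $\Im z=\tfrac12$ (resp.\ $\Im z=-\tfrac12$), and $\Re(ze^{-\psi i})$ reduces to $\tfrac12\sin(\phi-u)$ (resp.\ $-\tfrac12\sin(\phi-u)$), which is maximized at $u=\phi-\tfrac\pi2$ (resp.\ $u=\phi+\tfrac\pi2$), yielding the touching point $\sin\phi+\tfrac{i}{2}$ (resp.\ $-\sin\phi-\tfrac{i}{2}$).

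The computation in Step~2 is the crux; the only points requiring care are that squaring is legitimate precisely because $\cos\psi\ge 0$ (which is why $\psi$ is confined to $[0,\tfrac\pi2]\cup[\tfrac{3\pi}{2},2\pi)$), that in the boundary cases $\psi\in\{\tfrac\pi2,\tfrac{3\pi}{2}\}$ the line $\ell_\psi$ supports every $\gamma_\phi$ rather than a single one, and that in the degenerate cases $\phi\in\{\tfrac{2\pi}{3},\tfrac{4\pi}{3}\}$ the word ``tangent'' just means a supporting line meeting the segment $\gamma_\phi$ at an endpoint.
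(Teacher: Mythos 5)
Your proof is correct and takes essentially the same route as the paper's: parametrize $\gamma_\phi$ via Lemma~\ref{ellipse}, write $\Re(ze^{-\psi i})$ as a sinusoid in the parameter, bound its amplitude by $\tfrac12+\cos\psi$ using $\cos\psi\ge 0$, and read off the equality and tangency cases. Your substitution $u=t+\theta$, which turns the parametrization into $z=\cos u+\tfrac12 e^{(\phi-u)i}$ and reduces the key inequality to $\cos\psi\bigl(\cos(\phi-\psi)-1\bigr)\le 0$, is a genuine streamlining of the paper's longer algebra (the tangency points now drop out by direct substitution rather than by the paper's extended computation), and it correctly yields the touching point $\sin\phi+\tfrac12 i$ for $\psi=\tfrac{\pi}{2}$, which matches what the paper's own proof derives and confirms that the ``$i\tfrac12\pi$'' in the lemma statement is a typo for $\tfrac12 i$.
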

\begin{proof}
We claim that if $z \in \gamma_\phi$, then $\Re(z e^{-\psi i}) \leq \frac{1}{2} + \cos\psi$.

Indeed, given $z \in \gamma_\phi$, using Lemma~\ref{ellipse} we obtain $z=e^{\theta i} \left(\frac{|w|}{2} e^{-t i} + \frac{1}{2} e^{ ti} \right)$ for some $t \in \left[0,2\pi\right)$. Then $z e^{-\psi i} = \frac{|w|}{2} e^{(-t -\psi + \theta) i} + \frac{1}{2} e^{(t-\psi + \theta) i}$ and hence
\begin{equation}\label{ast}
\begin{split}
\Re( z e^{-\psi i})&= \frac{|w|}{2} \cos(-t -\psi + \theta) + \frac{1}{2} \cos(t-\psi + \theta)\\
&=\frac{|w|+1}{2} \cos(-\psi+\theta) \cos(t) + \frac{|w|-1}{2} \sin(-\psi+\theta) \sin(t)\\
&=A \cos(t-B)\\
&\leq A 
\end{split}
\end{equation}
where
\[
A = \sqrt{\left(\frac{|w|+1}{2} \cos(-\psi+\theta) \right)^2 + \left( \frac{|w|-1}{2} \sin(-\psi+\theta) \right)^2},
\]
and $0\leq B<2\pi $ is the angle satisfying the equations
\begin{align}\label{sincosofB}
    \cos B=\frac{|w|+1}{2A}\cos(-\psi+\theta)\hspace*{1.5cm}\sin B=\frac{|w|-1}{2A} \sin(-\psi+\theta).
\end{align}

Notice that $\Re( z e^{-\psi i}) = A \cos(t-B) \leq A$,  with equality if and only if $t=B$. So to prove the claim, we need to show that $A \leq \frac{1}{2} + \cos(\psi)$, which we now proceed to verify.

Observe that 
\begin{align*}
    A^2 &= \left(\frac{|w|+1}{2} \cos(-\psi+\theta) \right)^2 + \left( \frac{|w|-1}{2} \sin(-\psi+\theta) \right)^2 \\
    &=\frac{1+|w|^2}{4} + \frac{|w|}{2} \left( \cos^2(-\psi+\theta) - \sin^2(-\psi+\theta) \right) \\
    &= \frac{1+|w|^2}{4} + \frac{|w|}{2} \cos(2\psi -2\theta) \\
    &= \frac{1+|w|^2}{4} + \frac{|w|}{2} \left( \cos(2\psi) \cos(2\theta) + \sin(2\psi) \sin(2\theta) \right).
\end{align*}
Also, observe that $w=|w|e^{2\theta i}=|w| \cos(2\theta)+ i |w| \sin(2\theta)$ and hence
\begin{align*}
    A^2 &=  \frac{1+|w|^2}{4} + \frac{1}{2} \cos(2\psi)\Re(w) + \frac{1}{2} \sin(2\psi) \Im(w) \\
    &= \frac{1+|w|^2}{4} + \frac{1}{2} (2 \cos^2(\psi) -1) \Re(w) + \frac{1}{2} (2 \sin(\psi) \cos(\psi)) \Im(w) \\
    &= \frac{1+|w|^2}{4} - \frac{1}{2} \Re(w) +  \cos^2(\psi) \Re(w) + \sin(\psi) \cos(\psi) \Im(w).
\end{align*}
Substituting $|w|^2=2  \Re(w)$, $\Re(w)=1+\cos(\phi)$ and $\Im(w)=\sin(\phi)$ the above expression becomes
\begin{equation*}
\begin{split}
    A^2 
    &= \frac{1}{4} +  \cos^2(\psi) (1+ \cos(\phi)) + \sin(\psi) \cos(\psi)  \sin(\phi) \\ 
    &= \frac{1}{4} +  \cos(\psi) \big( \cos(\psi) + \cos(\psi) \cos(\phi) + \sin(\psi) \sin(\phi) \big)\\ 
    &=\frac{1}{4} +  \cos(\psi) \big( \cos(\psi) + \cos(\psi-\phi) \big)\\
    &= \frac{1}{4} +  \cos^2 (\psi)  + \cos(\psi) \cos(\psi-\phi)\\ 
     &\leq \frac{1}{4} +  \cos^2(\psi) +\cos(\psi) \qquad \qquad \qquad (\text{since } \cos(\psi) \geq 0, \text{ by the choice of } \psi)\\ 
      &= \left(\frac{1}{2} + \cos(\psi)\right)^2,
 \end{split}
 \end{equation*}
and thus, since, $\frac{1}{2} + \cos(\psi) >0$, we obtain 
\begin{equation}\label{A2}
A\leq \frac{1}{2}+\cos(\psi),
\end{equation}
with equality if and only if $\psi=\phi$ or $\psi=\frac{1}{2}\pi$ or $\psi=\frac{3}{2}\pi$.

Therefore, combining inequalities (\ref{ast}) and (\ref{A2}), it completes the proof of our claim  $\Re(z e^{-\psi i}) \leq \frac{1}{2} + \cos(\psi)$, with equality if and only if $t=B$, and $\psi=\phi$ or $\psi=\frac{1}{2}\pi$ or $\psi=\frac{3}{2}\pi$. Thus, if $z\in\gamma\sb\phi$ then $z\in\ovl{H\sb\psi}$. Hence $\gamma\sb\phi\subseteq \ovl{H\sb\psi}$, as was to be proved.

To prove part (1), we assume $\psi\not=\frac{1}{2}\pi$ and $\psi\not=\frac{3}{2}\pi$, and so $\psi\in\left[0,\frac{1}{2}\pi\right)\cup\left(\frac{3}{2}\pi,2\pi\right)$. Observe that then inequality (\ref{A2}) is a strict inequality if and only if $\phi\not=\psi$. Therefore, if $\phi\not=\psi$ then from inequalities (\ref{ast}) and (\ref{A2}) we obtain $\Re(z e^{-\psi i}) \leq A < \frac{1}{2} + \cos(\psi)$ and so $\gamma\sb\phi\subseteq H\sb\psi$ as wanted. 

On the other hand, if $\phi=\psi$, we are going to show that for the angle $t=B$ in Equation~(\ref{sincosofB}),  we obtain that $z=e^{\theta i}\left(\frac{|w|}{2} e^{- B i} + \frac{1}{2} e^{B i} \right)$, which is a point of $\gamma\sb\phi$ by Lemma~\ref{ellipse}, is the tangent point of $\ell\sb\phi$ to the circle $\Ci$. Therefore, we must show that $e^{\theta i}\left(\frac{|w|}{2} e^{-Bi} + \frac{1}{2} e^{Bi} \right)=1+\frac{1}{2}e^{\phi i}$. Indeed, using Equation~(\ref{sincosofB}) we compute

\begin{align*}
    e^{\theta i}\left(\frac{|w|}{2} e^{-Bi} + \frac{1}{2} e^{Bi} \right)
    &=
    \frac{e^{\theta i}}{4A}\left( (1+|w|)^2\cos(-\phi+\theta)-i(1-|w|)^2\sin(-\phi+\theta)
    \right)\\
    &=\frac{e^{\theta i}}{4A}\left( 
    \left(1+|w|^2\right)e^{-(-\phi+\theta)i}+2|w|e^{(-\phi+\theta)i}
    \right)\\
    &=\frac{1}{4A}\left(
       (1+|w|^2)e^{\phi i}+2|w|e^{(-\phi+2\theta)i}
    \right)\\
    &=\frac{1}{4A}\left(
       (2+2A)e^{\phi i}+2e^{-\phi i}w  \right) 
       \end{align*}
because $|w|^2=2\Re(w)=2+2\cos(\phi)=1+2A$ and $w=|w|e^{2\theta i}$. Now, since $w=1+e^{i\phi}$ we obtain
\begin{align*}
    e^{\theta i}\left(\frac{|w|}{2} e^{-Bi} + \frac{1}{2} e^{Bi} \right)
    &= \frac{1}{4A}\left(
       (2+2A)e^{\phi i}+2 e^{-\phi i}+2
    \right)\\
    &=\frac{1}{2A} \left(e^{\phi i} + A e^{\phi i} + e^{-\phi i} +1\right) \\
    &=\frac{1}{2A} \left(A e^{\phi i} + 2\cos(\phi) +1)\right) \\
    &=\frac{1}{2A} \left(A e^{\phi i} + 2 A)\right) \\
    &=\frac{1}{2} \left(e^{\phi i} + 2\right) \\
    &=1+\frac{1}{2}e^{\phi i},
\end{align*}
as wanted.

To prove part (2), we assume $\psi=\frac{1}{2}\pi$. Observe that in this case $\ell_\frac{\pi}{2}$ is the horizontal line through $\frac{1}{2} i$.

We are going to show that  for the angle $t=B$ in Equation~(\ref{sincosofB}),  we obtain $e^{\theta i}\left(\frac{|w|}{2} e^{- t i} + \frac{1}{2} e^{ti} \right)=\sin\phi+i\frac{\pi}{2}$, where the left hand side is a  point of $\gamma\sb\phi$ by Lemma~\ref{ellipse}, and the right hand side is the tangent point on $\ell\sb{\frac{\pi}{2}}$. Observe that from Equation~(\ref{A2}) we have $A=\frac{1}{2}$ and so from Equation~(\ref{sincosofB}) we deduce that $\cos B=(|w|+1)\sin\theta$ and $\sin B=-(|w|-1)\cos\theta$. Thus
\begin{align*}
e^{\theta i} \left(\frac{|w|}{2} e^{-B i} + \frac{1}{2} e^{B i}\right)
={} &\frac{|w|}{2} e^{(\theta-B)i}+\frac{1}{2} e^{(\theta+B)i}  \\
={} &\frac{|w|+1}{2}\cos\theta\cos B +
      \frac{|w|-1}{2}\sin\theta\sin B  \\
      & +  i\left(\frac{|w|+1}{2}\sin\theta\cos B -
      \frac{|w|-1}{2}\cos\theta\sin B
      \right)\\
={} &\frac{(|w|+1)^2}{2}\cos\theta\sin\theta-
      \frac{(|w|-1)^2}{2}\sin\theta\cos\theta \\
      & + i\left( \frac{(|w|+1)^2}{2}\sin^2\theta
      +\frac{(|w|-1)^2}{2}\cos^2\theta\right)\\
={} & 2|w|\sin\theta\cos\theta+i\left(
    \frac{|w|^2+1}{2}-|w|\cos(2\theta)
    \right)\\
={} & |w|\sin(2\theta) +i\left(
      \frac{|w|^2+1}{2}-\Re(w)
       \right)\\
={} &\Im(w)+i\left(
       \frac{3+2\cos\phi}{2}-\left( 1+\cos\phi \right)
       \right)\\
={} &\sin\phi +\frac{1}{2}i,
\end{align*}
as was to be proved.

The proof that if $\psi=\frac{3}{2}\pi$, then $\ell_\psi$ (which is the horizontal line through $-i\frac{1}{2}$) is tangent to $\gamma_\phi$ at the point $-\sin\phi-i\frac{1}{2}\pi$ is similar, so we omit it.
\end{proof}

Clearly, there is an analogous lemma for the tangent lines to the semicircle $-\left(1+\frac{1}{2} e^{\psi i}\right)$ for $\psi\in\left[0,\frac{\pi}{2}\right)\cup\left[\frac{3\pi}{2}, 2\pi\right)$: each ellipse $\gamma_\phi$ is contained in the semiplane (containing the origin) defined by each tangent and is tangent to the horizontal tangent line.

Using the previous lemma, we can see that the closure of the convex hull of the union of the ellipses $\gamma_\phi$ has a simple form.

\begin{proposition}\label{EqualConvex}
Let $\phi\in[0,2\pi)$ and  let  $E\sb\phi$ be the closed  convex set with boundary the ellipse $\gamma_\phi$, as in Definition~\ref{gammaphi}. Furthermore, let $A$ and $B$ denote the closed convex sets with boundary the circles of radii $1/2$ and centers at $-1$ and $1$, respectively. Then
\begin{equation*}
  \ovl{ \conv{\bigcup_{\phi\in[0,2\pi)} E_\phi }}
  =
  \conv{A\cup B}    
\end{equation*}
\end{proposition}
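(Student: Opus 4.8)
The plan is to prove the two inclusions separately, using Lemma~\ref{semiplanes} to get the nontrivial one. For the inclusion $\conv{A\cup B}\subseteq\ovl{\conv{\bigcup_\phi E_\phi}}$, it suffices to observe that $B=E_0$ (indeed, when $\phi=0$ we have $w=2$, so $\gamma_0$ is the ``ellipse'' with foci $\pm\sqrt{2}$ and major axis $1+2=3$; hmm, that is not the circle $\Ci$). Let me instead argue directly: the circle $\Ci=\{|z-1|=1/2\}$ is, by the tangency statement in part~(1) of Lemma~\ref{semiplanes}, the envelope of the family of lines $\{\ell_\psi\}$, and each $\gamma_\phi$ lies in $\ovl{H_\psi}$ for every admissible $\psi$; together with the analogous statement for the reflected family, this shows $\bigcup_\phi E_\phi$ — hence its closed convex hull — is contained in the region cut out by all these tangent lines, which is exactly $\conv{A\cup B}$ (a ``stadium''-like convex set). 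Conversely, every point of $\Ci$ is a tangency point $1+\tfrac12 e^{\phi i}$ lying on $\gamma_\phi\subseteq E_\phi$ by part~(1), and similarly every point of the circle bounding $A$ lies on some reflected ellipse; so $A\cup B\subseteq\ovl{\bigcup_\phi E_\phi}$, and taking closed convex hulls finishes that direction.

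In more detail, here is the order I would carry out the steps. First I would describe the convex set $K:=\conv{A\cup B}$ explicitly: since $A$ and $B$ are disjoint disks of radius $1/2$ centered at $\mp1$, $K$ is bounded by the two arcs of $\partial A$, $\partial B$ facing away from each other, joined by the two common outer tangent segments (the horizontal segments from $-1\pm\tfrac12 i$ to $1\pm\tfrac12 i$). I would then record that $K$ is precisely the intersection of all closed half-planes $\ovl{H_\psi}$ (for $\psi\in[0,\tfrac\pi2]\cup[\tfrac32\pi,2\pi)$) together with their reflections through the origin: the curved part of $\partial K$ on the right is the envelope of $\{\ell_\psi\}$ over $\psi\in(-\tfrac\pi2,\tfrac\pi2)$, the two horizontal edges come from $\psi=\pm\tfrac\pi2$, and the left curved part from the reflected family. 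Second, I would invoke Lemma~\ref{semiplanes}: every $\gamma_\phi$, and hence every $E_\phi$, lies in $\ovl{H_\psi}$ for all such $\psi$, and in the reflected half-planes by the remark following the lemma; intersecting over all $\psi$ gives $E_\phi\subseteq K$ for every $\phi$, so $\conv{\bigcup_\phi E_\phi}\subseteq K$, and since $K$ is closed, $\ovl{\conv{\bigcup_\phi E_\phi}}\subseteq K$. Third, for the reverse inclusion, I would use the tangency assertions: by part~(1) of Lemma~\ref{semiplanes} the point $1+\tfrac12 e^{\phi i}$ lies on $\gamma_\phi$, so $\partial B=\Ci\subseteq\bigcup_\phi\gamma_\phi\subseteq\bigcup_\phi E_\phi$; since $E_0$ (or any single $E_\phi$) already meets the interior, convexity gives $B\subseteq\conv{\bigcup_\phi E_\phi}$, and likewise $A\subseteq\conv{\bigcup_\phi E_\phi}$ via the reflected family. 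Hence $\conv{A\cup B}\subseteq\ovl{\conv{\bigcup_\phi E_\phi}}$, completing the proof.

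The main obstacle I anticipate is the first direction — verifying that the intersection of all the half-planes $\ovl{H_\psi}$ and their reflections is \emph{exactly} $K=\conv{A\cup B}$, with no extra slack. This is a statement about envelopes of one-parameter families of lines: one must check that the family $\{\ell_\psi:\psi\in(-\tfrac\pi2,\tfrac\pi2)\}$ sweeps out the full right half of $\partial B$ (which follows from the tangency point being $1+\tfrac12 e^{\psi i}$, covering all of $\Ci$ as $\psi$ ranges over a full period, though only half of it is ``outward facing''), that $\psi=\pm\tfrac\pi2$ contribute precisely the two horizontal supporting lines $y=\pm\tfrac12$, and that the reflected family handles $\partial A$; and then that these supporting lines are consistent, i.e. at the junction points $\pm1\pm\tfrac12 i$ the disk boundary and the horizontal segment share a tangent, so no corner of $K$ is ``cut off.'' All of this is elementary plane geometry once Lemma~\ref{semiplanes} is in hand, so it should go through without serious difficulty; the bookkeeping of which $\psi$ produces which piece of $\partial K$ is the only place where care is needed.
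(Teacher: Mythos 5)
Your proof is correct and follows essentially the same route as the paper: Lemma~\ref{semiplanes} supplies the half-plane containments $\gamma_\phi\subseteq\ovl{H_\psi}$ (and their reflections) for one inclusion, and the tangency points for the other. The only, inconsequential, overstatement is the claim $\partial B=\Ci\subseteq\bigcup_\phi\gamma_\phi$: the lemma only exhibits the \emph{outer} semicircle $\left\{1+\tfrac{1}{2}e^{\psi i}:\psi\in\left[0,\tfrac{\pi}{2}\right]\cup\left[\tfrac{3}{2}\pi,2\pi\right)\right\}$ as tangency points, but since the convex hull of the two outer semicircles (of $\partial A$ and $\partial B$) is already all of $\conv{A\cup B}$, the argument goes through unchanged.
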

\begin{proof}
First observe that the boundary of $\conv{A \cup B}$ consists of the union of the segment between $-1+\frac{1}{2}i$ and $1+\frac{1}{2}i$, the segment between $-1-\frac{1}{2}i$ and $1-\frac{1}{2}i$, the semicircle $1+\frac{1}{2} e^{\psi i}$ for $\psi\in\left[0,\frac{\pi}{2}\right]\cup\left[\frac{3\pi}{2}, 2\pi\right)$,
and the semicircle $-\left(1+\frac{1}{2} e^{\psi i}\right)$ for $\psi\in\left[0,\frac{\pi}{2}\right]\cup\left[\frac{3\pi}{2}, 2\pi\right)$.

    To prove the first inclusion, it suffices to show that for each $\phi \in [0,2\pi)$, the ellipse $\gamma_\phi$ is contained in $\conv{A \cup B}$. But observe that, by Lemma~\ref{semiplanes} each ellipse is tangent to the segment between the points $-1+\frac{1}{2}i$ and $1+\frac{1}{2}i$, and hence is below said segment. Also, each ellipse is tangent to the segment between the points $-1-\frac{1}{2}i$ and $1-\frac{1}{2}i$ and hence is above said segment. Also, by Lemma~\ref{semiplanes}, each ellipse is contained in $H_\psi$ for each $\psi \in \left[0,\frac{1}{2}\pi\right] \cup \left[\frac{3}{2}\pi, 2\pi\right)$; i.e. each ellipse $\gamma_\phi$ is contained in the semiplane (containing the origin) determined by each tangent line to the semicircle $1+\frac{1}{2} e^{\psi i}$ for $\psi\in\left[0,\frac{\pi}{2}\right) \cup \left(\frac{3\pi}{2}, 2\pi\right)$, and hence each ellipse is to the ``left'' (or ``below'', in the cases $\psi=\frac{1}{2}\pi$ or $\psi=\frac{3}{2}\pi$) of said semicircle. Analogously, one can show that each ellipse is contained in the semiplane (containing the origin) determined by each tangent line to the semicircle $-\left(1+\frac{1}{2} e^{\psi i}\right)$ for $\psi\in\left[0,\frac{\pi}{2}\right] \cup \left[\frac{3\pi}{2}, 2\pi\right)$, and hence each ellipse is to the ``right'' (or ``above'') of said semicircle. Hence each ellipse is contained in $\conv{A \cup B}$.
    
    For the other inclusion, by convexity, we only need to argue that the boundary of the right-hand side is included in the left-hand side. In fact, we need to show that each point in the boundary of $\conv{A\cup B}$ is in at least one of the ellipses $\gamma_\phi$. But every point in each of these parts is the tangent point to an ellipse $\gamma_\phi$, as shown by Lemma~\ref{semiplanes}, and hence it belongs to $E_\phi$ for some $\phi \in [0, 2\pi)$. That concludes the proof. 
\end{proof}

The following lemma is an easy observation but it will be useful in the sequel. Compare with \cite[Lemma~1]{CS} where a more general result for matrices is proved.

\begin{lemma}\label{le:symmetric}
Let $T=T(a,0,c)$ be a tridiagonal operator where $0$ is the sequence of zeroes and $a,c$ are  sequences on some alphabet. Then $W(T)$ is symmetric with respect to the origin.
\end{lemma}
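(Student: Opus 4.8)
The plan is to exhibit an explicit symmetry of the operator $T=T(a,0,c)$ that conjugates it to $-T$, from which the symmetry $W(T)=-W(T)$ is immediate, since for a unitary $V$ one always has $W(V^*TV)=W(T)$. First I would work on $\ell^2(\N_0)$ (which suffices by the Proposition in Section~1, or simply because the statement is about $W(T)$ directly). Consider the diagonal unitary $V$ on $\ell^2(\N_0)$ defined by $Ve_j=(-1)^je_j$ for $j\in\N_0$, i.e. the ``checkerboard'' sign operator. The key step is the routine computation $V^*TV=-T$: the diagonal of $T$ is zero, so it is unaffected, while each off-diagonal entry $T_{j,j+1}=c_j$ and $T_{j+1,j}=a_{j+1}$ connects indices of opposite parity, so conjugating by $V$ multiplies every such entry by $(-1)^{j}(-1)^{j+1}=-1$. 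Hence $V^*TV=-T$.

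From $V^*TV=-T$ we conclude $W(T)=W(V^*TV)=W(-T)=-W(T)$, which is exactly the assertion that $W(T)$ is symmetric with respect to the origin. For the biinfinite operator $A_b$ (should one want the statement in that generality), the identical argument applies with $V$ the diagonal unitary $e_j\mapsto(-1)^je_j$ on $\ell^2(\Z)$; again only the vanishing of the main diagonal and the parity-shifting nature of the sub- and super-diagonals are used.

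There is essentially no obstacle here: the only thing to be a little careful about is bookkeeping the parities of the indices so that one is certain every nonzero off-diagonal entry picks up a factor of $-1$ and no entry is missed — but since $T$ is tridiagonal with zero diagonal, every nonzero entry is of the form $(j,j\pm 1)$ and the sign flip is uniform. I would therefore present the proof as: define $V$, verify $V^*TV=-T$ by comparing matrix entries, and invoke the unitary invariance of the numerical range together with $W(-T)=-W(T)$.
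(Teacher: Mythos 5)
Your proof is correct and is essentially the paper's own argument in operator form: the paper takes an arbitrary unit vector $x$ and passes to $y=(x_0,-x_1,x_2,-x_3,\dots)$, which is exactly $Vx$ for your diagonal unitary $V$, and verifies $\inpr{Ty,y}=-\inpr{Tx,x}$, while you phrase the same parity computation as the identity $V^*TV=-T$ followed by unitary invariance of the numerical range. No substantive difference.
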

\begin{proof}
    Let $\lambda=\inpr{Tx,x}$ be an arbitrary element in $W(T)$, where $x=(x_0,\ x_1,\ x_2,\ x_3, \cdots )$ is a unitary vector. Then $y=(x_0,\ -x_1,\ x_2,\ -x_3,\cdots )$ is also a unitary vector. A straightforward computation then shows that $\inpr{Ty,y}=-\lambda$, and hence $-\lambda \in W(T)$, as was to be proved.
\end{proof}

We are now ready to say what the numerical range of the operator $T(a,b,c)$ is for a particular case. Although it is possible to deduce it from \cite[Theorem~4.1]{BebianoEtAl}, we believe our elementary approach to be of independent interest.

\begin{theorem}\label{spcase}
Let $T=T(a,0,1)$ be the tridiagonal operator where $0$  and $1$ are the constant sequences of zeroes and ones, respectively, and $a$ is the periodic sequence with period word $01$.  Let $A$ and $B$ denote the closed convex sets with boundary the circles of radii $1/2$ and centers at $-1$ and $1$, respectively.
Let $C=\begin{pmatrix}1&1\\0&1\end{pmatrix}$ and $D=\begin{pmatrix}-1&\phantom{-}1\\0&-1\end{pmatrix}$. Then 
\[
 \overline{W(T)}=\conv{  A \cup B }=\conv{W(C)\cup W(D)}.
 \]
\end{theorem}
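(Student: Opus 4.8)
The plan is to reduce everything to the two results already in hand: the structural identity of Theorem~\ref{maintheorem} and the explicit convex-geometry computation of Proposition~\ref{EqualConvex}. The only genuinely new input is the observation that, for this particular operator, the symbols $T_\phi$ are exactly the $2\times2$ matrices whose numerical ranges are the ellipses $E_\phi$ of Definition~\ref{gammaphi}, together with an elementary identification of $W(C)$ and $W(D)$.

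First I would write out the symbol of $T=T(a,0,1)$. Here $n=1$, so $b_0=b_1=0$ and $c_0=c_1=1$, while the period word of $a$ being $01$ means $a_0=0$ and $a_1=1$. Plugging these into \eqref{eq:symbol2} gives $T_\phi=\begin{pmatrix}0&1\\1+e^{i\phi}&0\end{pmatrix}$. Writing $w=1+e^{i\phi}$ and applying the property of the numerical range of a $2\times2$ off-diagonal matrix recalled in the Introduction (with $a=1$, $b=w$), we get that $W(T_\phi)$ is the filled ellipse with foci $\pm\sqrt{w}$ and major axis of length $1+|w|$; that is, $W(T_\phi)=E_\phi$. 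The degenerate value $\phi=\pi$, where $w=0$ and the ellipse collapses to the circle of radius $1/2$ about the origin, is covered by the convention $\theta=0$ in Lemma~\ref{ellipse}.

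Next, since $T$ is a $2$-periodic tridiagonal operator on $\ell^2(\N_0)$, Theorem~\ref{maintheorem} applies and gives
\[
\overline{W(T)}=\overline{\conv{\bigcup_{\phi\in[0,2\pi)}W(T_\phi)}}=\overline{\conv{\bigcup_{\phi\in[0,2\pi)}E_\phi}},
\]
and then Proposition~\ref{EqualConvex} rewrites the right-hand side as $\conv{A\cup B}$. This establishes the first equality $\overline{W(T)}=\conv{A\cup B}$.

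Finally, for the second equality I would compute $W(C)$ and $W(D)$ directly. Let $N=\begin{pmatrix}0&1\\0&0\end{pmatrix}$; by the same $2\times2$ property (now with $a=1$, $b=0$, so the ellipse degenerates to a circle), $W(N)$ is the closed disk of radius $1/2$ centered at $0$. Since $C=I+N$ and $D=-I+N$, the affine invariance $W(aT+b)=aW(T)+b$ yields $W(C)=1+W(N)=B$ and $W(D)=-1+W(N)=A$, hence $\conv{W(C)\cup W(D)}=\conv{A\cup B}$, which together with the previous paragraph finishes the proof. I expect no real obstacle: all the analytic difficulty has been absorbed into Theorem~\ref{maintheorem} and Proposition~\ref{EqualConvex}, and what remains is the routine check that the symbol $T_\phi$ matches Definition~\ref{gammaphi} verbatim and the elementary identification of $W(C)$ and $W(D)$ with the two disks.
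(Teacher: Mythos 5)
Your proof is correct and follows essentially the same route as the paper's: identify $W(T_\phi)$ with the ellipse $E_\phi$ via the standard formula for the numerical range of a $2\times 2$ off-diagonal matrix, then invoke Theorem~\ref{maintheorem} and Proposition~\ref{EqualConvex}, and finally identify $W(C)$ and $W(D)$ with the two disks. Your symbol matrix $\begin{pmatrix}0&1\\1+e^{i\phi}&0\end{pmatrix}$ differs superficially from the one displayed in the paper, but the product and the sum of moduli of the off-diagonal entries agree, so both yield the same ellipse $E_\phi$ and the argument goes through unchanged.
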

\begin{proof}
We begin by noticing that the numerical ranges $W(C)$ and $W(D)$ are $A$ and $B$, respectively.
    Now, the result follows easily by  combining Theorem~\ref{maintheorem} with Proposition~\ref{EqualConvex}. Indeed, it will suffice to show that  $W(T\sb\phi)$ is $E\sb\phi$. Observe that by equation \eqref{eq:symbol2}, 
    \[
    T_\phi=\begin{pmatrix}0 & 1+ e^{-\phi i}\\ e^{\phi i} & 0 \end{pmatrix}.
    \]
    Put $w=1+e^{i\phi}$. Then the numerical range  $W(T\sb\phi)$ is the closed set with boundary the ellipse with foci at $\pm \sqrt{e^{\phi i} (1+e^{-\phi i})}=\pm \sqrt{w}$ and mayor axis equal to $|e^{\phi i}| +|1+e^{-\phi i}|=1+|w|$, that is, $E\sb\phi$, as wanted.
    \end{proof}
    
    There is a simpler alternative proof of the contention $\conv{A \cup B} \subseteq \ovl{W(T)}$. Indeed, by Lemma~\ref{le:symmetric}, since $A$ and $B$ are symmetric with respect to the origin, it will suffice to show that $A\subseteq \ovl{W(T)}$ in order to obtain the inclusion $\conv{A\cup B} \subseteq \ovl{W(T)}$, by convexity of the numerical range. For this purpose, let $1+\lambda /2$ be an arbitrary element in the interior of $A$, where $\lambda$ is a complex number with modulus less than one. Let $x=(1,1,\lambda,\lambda,\lambda^2,\lambda^2,\ldots)$ a vector in $\ell^2(\N_0)$ and let $u=x/\|x\|$ be the normalization of $x$. A computation shows that $\inpr{Tu,u}=1+\lambda/2$, and so $1+\lambda/2$ belongs to $W(T)$. Hence  $ A$ is included in $\overline{W(T)}$, as was to be proved.

\begin{figure}
\includegraphics[scale=.4]{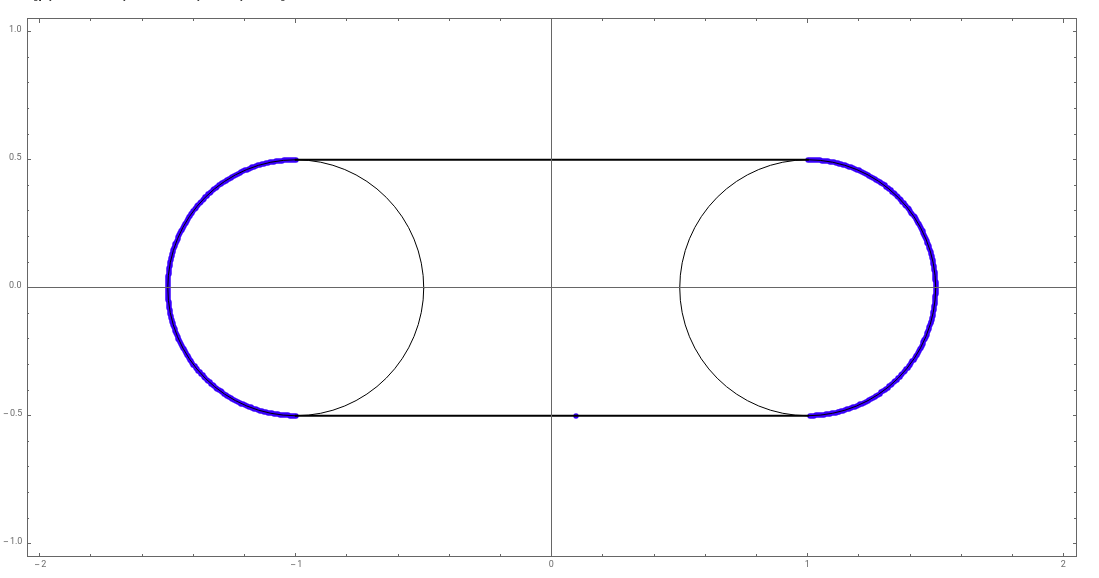}
\caption{The convex hull of the blue points represents the closure of $W(T)$ for $n=1$. The circles are the numerical ranges of $B_1+J_1$ and $B_1-J_1$, respectively.}
\label{n=1}
\end{figure}

\begin{figure}
\includegraphics[scale=.38]{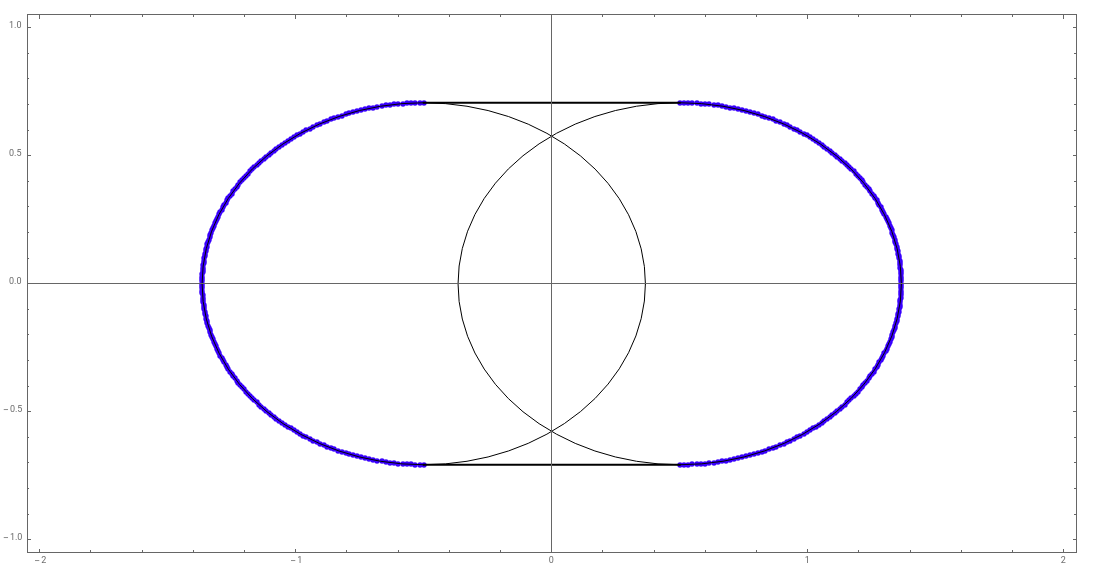}
\caption{The convex hull of the blue points represents the closure of $W(T)$ for $n=2$. The ellipses are the numerical ranges of $B_2+J_2$ and $B_2-J_2$, respectively. They are the ellipses centered at $(1/2,0)$ and $(-1/2,0)$, respectively, with major axis $\sqrt{3}$ and minor axis $\sqrt{2}$.}
\label{n=2}
\end{figure}

\begin{figure}
\includegraphics[scale=.38]{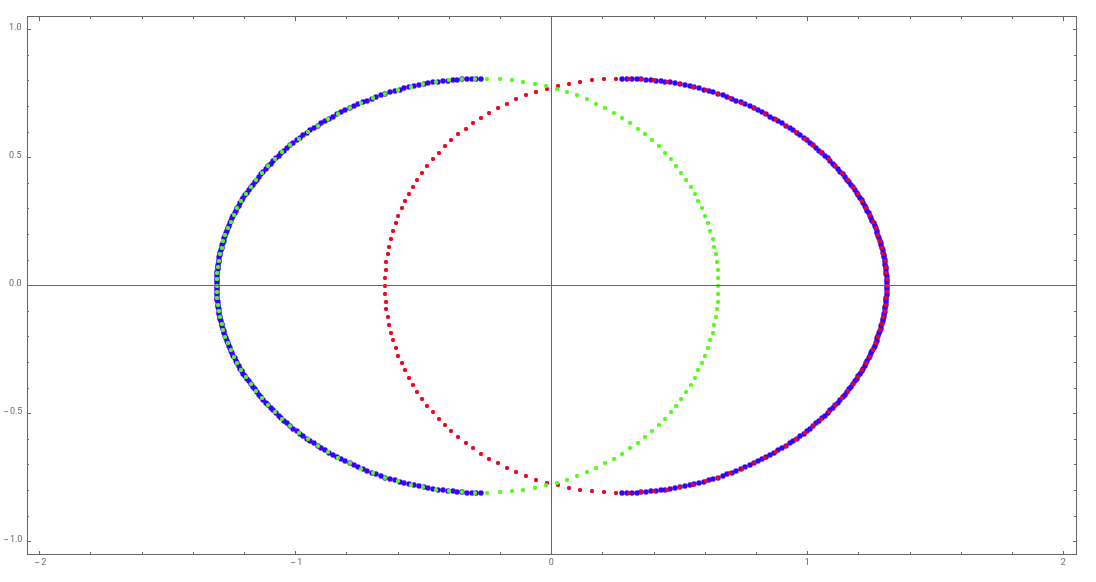}
\caption{The convex hull of the blue points represents the closure of $W(T)$ for $n=3$. The red points represent the numerical range of $B_3+J_3$ and the green points that of $B_3-J_3$, respectively.}
\label{n=3}
\end{figure}

\begin{conjecture}\label{conjecture}
Let $T=T(b,0,1)$ be the tridiagonal operator where $0$  and $1$ are the constant sequences of zeroes and ones, respectively, and $b$ is the $(n+1)$-periodic sequence with period word $0^{n} 1 $. Let $J_n$ denote the $(n+1)\times (n+1)$ matrix with value $1$ at the positions $(1,1)$ and $(n+1,n+1)$ and zero everywhere else, and let $B_n$ be the $(n+1)\times (n+1)$ matrix which has $1$'s above the diagonal and is 0 everywhere else. Then
\begin{equation*}
\overline{W(T)}=\mathrm{co}\bigl( W\left( B_n+J_n\right)\ \cup\ W\left(B_n-J_n\right) \bigr).
\end{equation*}
\end{conjecture}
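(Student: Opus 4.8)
The plan is to mimic the proof of Theorem~\ref{spcase}, replacing the two-dimensional ellipse geometry of Section~3 by its $(n+1)$-dimensional analogue. The first step is to identify the symbol of $T=T(b,0,1)$. Since the diagonal is zero, the superdiagonal is the constant sequence $1$, and the subdiagonal has period word $0^{n}1$ (so $a_0=\dots=a_{n-1}=0$ and $a_n=1$), the defining formula for $T_\phi$ (together with \eqref{eq:symbol2} when $n=1$) gives $T_\phi=M_0+e^{i\phi}E$, where $M_0$ is the $(n+1)\times(n+1)$ matrix carrying $1$'s on the superdiagonal and a $1$ in position $(n+1,n)$, and $E$ is the matrix with a single $1$ in position $(n+1,1)$. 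By Theorem~\ref{maintheorem}, $\overline{W(T)}=\overline{\mathrm{co}\bigl(\bigcup_{\phi\in[0,2\pi)}W(T_\phi)\bigr)}$. Both this set and $\mathrm{co}\bigl(W(B_n+J_n)\cup W(B_n-J_n)\bigr)$ are compact and convex, so to prove they are equal it suffices to show they have the same support function; writing $\lambda^+(\cdot)$ for the largest eigenvalue of a Hermitian matrix and using the identities $\Re\bigl(W(e^{-i\theta}A)\bigr)=W\bigl(\Re(e^{-i\theta}A)\bigr)=[\lambda^-,\lambda^+]$, this reduces the theorem to the equality
\[
\sup_{\phi\in[0,2\pi)}\lambda^+\bigl(\Re(e^{-i\theta}T_\phi)\bigr)\;=\;\max\Bigl(\lambda^+\bigl(\Re(e^{-i\theta}(B_n+J_n))\bigr),\ \lambda^+\bigl(\Re(e^{-i\theta}(B_n-J_n))\bigr)\Bigr)
\]
for every $\theta\in[0,2\pi)$. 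Once this is proved, the two convex sets coincide, and in particular both $W(B_n\pm J_n)\subseteq\overline{W(T)}$ and the reverse inclusion follow.

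For the inclusion $\mathrm{co}\bigl(W(B_n+J_n)\cup W(B_n-J_n)\bigr)\subseteq\overline{W(T)}$ there is also a direct argument worth recording, extending the elementary computation given right after Theorem~\ref{spcase}. Grouping the coordinates of $\ell^2(\N_0)$ into consecutive blocks of length $n+1$ exhibits $T$ as the block upper-bidiagonal operator with $M_0$ on the block diagonal and $E$ on the block superdiagonal. For a block-geometric vector $x=(v,\mu v,\mu^2v,\dots)$ with $v\in\C^{n+1}$ and $|\mu|<1$, one computes $\langle Tx,x\rangle/\|x\|^2=\langle(M_0+\mu E)v,v\rangle/\|v\|^2$, so $W(M_0+\mu E)\subseteq W(T)$ for every $|\mu|<1$; letting $\mu\to e^{i\phi}$ and using continuity of the numerical range of a matrix in its entries, $W(T_\phi)\subseteq\overline{W(T)}$ for all $\phi$ (which reproves, for this $T$, one of the inclusions of Theorem~\ref{maintheorem}). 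It then remains to see that $W(B_n+J_n)$ and $W(B_n-J_n)$ lie in the closed convex hull of $\{W(T_\phi):\phi\in[0,2\pi)\}$: these two matrices should be the $(n+1)$-dimensional counterparts of the circles $\Ci$ and $-\Ci$ from the case $n=1$, in the sense that their numerical ranges are the two ``inner envelopes'' to which all the regions $W(T_\phi)$ are tangent---which is again the content of the displayed eigenvalue identity, and which for $n=1$ is also handled via Lemma~\ref{le:symmetric} and convexity.

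The main obstacle is precisely this eigenvalue identity. For $n=1$ it is exactly Lemmas~\ref{ellipse}--\ref{semiplanes} together with Proposition~\ref{EqualConvex}, established there by an explicit trigonometric tangent-line analysis of ellipses; for general $n$ the regions $W(T_\phi)$ are no longer ellipses, so a more structural argument is required. A reasonable starting point is that, after a diagonal unitary conjugation, $2\Re(e^{-i\theta}T_\phi)$ is a Hermitian tridiagonal matrix with two corner entries whose data depends only on $\theta$ and on the single combined angle $\phi-n\theta$: its off-diagonal entries are real and equal to $1$ except for one pair equal to $2\cos\theta$, while the corner pair is an arbitrary unimodular number of argument $\pm(\phi-n\theta)$---a periodic Jacobi symbol with one exceptional intra-period coupling. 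One would then track the largest root of its characteristic polynomial, for each fixed $\theta$ maximize over $\phi$ and identify the optimal $\phi=\phi(\theta)$, and compare with the largest eigenvalue of the corresponding Hermitian matrix coming from $B_n\pm J_n$, which has all off-diagonal entries equal to $e^{-i\theta}$ above and $e^{i\theta}$ below the diagonal and diagonal $(\pm2\cos\theta,0,\dots,0,\pm2\cos\theta)$. Pinning down the change of basis that makes this comparison transparent---and thereby explaining why it is exactly the two matrices $B_n\pm J_n$ that appear---is the crux, and is the step for which the referee has supplied a proof.
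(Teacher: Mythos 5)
There is nothing in the paper to compare your attempt against: the statement is labelled a \emph{conjecture}, the authors prove it only for $n=1$ (Theorem~\ref{spcase}), report computer verification for $n=2,3$, and explicitly defer the referee's proof to a later publication. Your write-up is likewise not a proof. The reductions you carry out are correct and worthwhile: the identification $T_\phi=M_0+e^{i\phi}E$ is right (for the period word $0^n1$ only $a_n=1$ survives, so the $(1,n+1)$ corner of the symbol vanishes and the $(n+1,1)$ corner is $e^{i\phi}$); the passage via Theorem~\ref{maintheorem} and support functions to the identity
\[
\sup_{\phi\in[0,2\pi)}\lambda^+\bigl(\Re(e^{-i\theta}T_\phi)\bigr)=\max\Bigl(\lambda^+\bigl(\Re(e^{-i\theta}(B_n+J_n))\bigr),\ \lambda^+\bigl(\Re(e^{-i\theta}(B_n-J_n))\bigr)\Bigr)
\]
is a legitimate equivalence between two compact convex sets; and the block-geometric-vector computation giving $W(M_0+\mu E)\subseteq W(T)$ for $|\mu|<1$ is a clean generalization of the remark following Theorem~\ref{spcase}. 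But every one of these steps is soft; the entire content of the conjecture now sits in the displayed eigenvalue identity, and for that you offer only a heuristic (a diagonal gauge transformation reducing the data to the combined angle $\phi-n\theta$, followed by an unexecuted optimization over $\phi$ and an unspecified comparison with $B_n\pm J_n$). You say yourself that this is ``the crux'' and that it is ``the step for which the referee has supplied a proof''---which is to concede that the proof is not here.

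Concretely, the gap is this: you must show, for each fixed $\theta$, that the maximum over $\phi$ of the top eigenvalue of the Hermitian periodic-Jacobi-with-corner matrix $\Re(e^{-i\theta}T_\phi)$ is attained at a value of $\phi$ for which that matrix is unitarily equivalent to $\Re(e^{-i\theta}(B_n+J_n))$ or $\Re(e^{-i\theta}(B_n-J_n))$ (or at least has the same top eigenvalue), and symmetrically for the bottom eigenvalue. Nothing in your outline establishes where the maximum over $\phi$ occurs, nor why the extremal matrix should match $B_n\pm J_n$, whose diagonal $(\pm2\cos\theta,0,\dots,0,\pm2\cos\theta)$ has no evident counterpart in the zero-diagonal matrices $\Re(e^{-i\theta}T_\phi)$; for $n=1$ the paper needs all of Lemmas~\ref{ellipse} and~\ref{semiplanes} and Proposition~\ref{EqualConvex} to carry out the analogous comparison, and no higher-dimensional substitute is supplied. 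Also note a small imprecision in your gauge argument: conjugating $2\Re(e^{-i\theta}T_\phi)$ by a diagonal unitary cannot simultaneously make the $n-1$ entries of modulus $1$ and the exceptional entry $2\cos\theta$ real and positive while isolating all the phase in the corner; what is invariant is the product of the cyclic off-diagonal entries, $2\cos\theta\,e^{i(\phi-n\theta)}$, so the sign of $\cos\theta$ must be tracked along with the angle $\phi-n\theta$.
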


The above conjecture is true for $n=1$, as shown in Theorem~\ref{spcase}, see Figure~\ref{n=1}. Cases $n=2$ and $n=3$ have been verified through computer simulations, see Figures \ref{n=2} and \ref{n=3}. However, we are unable to provide a proof yet.

We conclude with an additional observation regarding the symmetry of the set $\overline{W(T)}$. First we prove the following.

\begin{proposition}
Let $J_n$ denote the $(n+1)\times (n+1)$ matrix with value $1$ at the positions $(1,1)$ and $(n+1,n+1)$ and zero everywhere else, and let $B_n$ be the $(n+1)\times (n+1)$ matrix which has $1$'s above the diagonal and is 0 everywhere else. Then
\begin{equation*}
    W\left(B_n+J_n\right)=-W(B_n-J_n).
\end{equation*}
\end{proposition}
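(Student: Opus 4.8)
The plan is to show that $B_n + J_n$ and $-(B_n - J_n) = J_n - B_n$ are unitarily equivalent; since unitarily equivalent matrices have the same numerical range, and since $W(-M) = -W(M)$ (a special case of the identity $W(aT+b)=aW(T)+b$ recalled in the introduction), this yields $W(B_n+J_n) = W(J_n-B_n) = -W(B_n-J_n)$ at once.

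First I would introduce the conjugating unitary. Let $S = \mathrm{diag}\bigl(1,-1,1,-1,\dots,(-1)^{n}\bigr)\in M_{n+1}(\C)$, that is, $S_{jj} = (-1)^{j-1}$ for $1\le j\le n+1$; this is a real symmetric matrix with $S^2 = I$, hence a self-adjoint unitary with $S^{-1} = S^* = S$. Next I would record two one-line computations. Since the only nonzero entries of $B_n$ are the $1$'s on the first superdiagonal, at the positions $(i,i+1)$ with $1\le i\le n$, conjugation by $S$ multiplies the entry in position $(i,i+1)$ by $S_{ii}S_{i+1,i+1} = (-1)^{i-1}(-1)^{i} = -1$, uniformly in $i$; therefore $S B_n S = -B_n$. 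On the other hand $J_n$ is diagonal, so it commutes with $S$ and $S J_n S = S^2 J_n = J_n$. Adding these, $S(B_n + J_n)S = -B_n + J_n = -(B_n - J_n)$.

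Finally, since $S$ is unitary,
\[
W(B_n + J_n) = W\bigl(S(B_n+J_n)S^*\bigr) = W\bigl(-(B_n - J_n)\bigr) = -W(B_n - J_n),
\]
which is the assertion.

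There is really no obstacle here: the argument is simply that the alternating-sign similarity flips the sign of the single off-diagonal band of $B_n$ while fixing the diagonal matrix $J_n$. The only point needing a moment of care is the parity identity $S_{ii}S_{i+1,i+1} = -1$, which holds uniformly in $i$ precisely because the nonzero entries of $B_n$ all lie on a single band adjacent to the diagonal. (Combined with Conjecture~\ref{conjecture}, this shows that $\overline{W(T)} = \mathrm{co}\bigl(W(B_n+J_n)\cup W(B_n-J_n)\bigr)$ is symmetric with respect to the origin, in line with Lemma~\ref{le:symmetric}.)
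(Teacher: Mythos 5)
Your proof is correct, and it is essentially the paper's argument in a different wrapper: the paper applies the same alternating-sign substitution $\vec{y}=S\vec{x}$ directly to the quadratic form $\inpr{(B_n-J_n)\vec{y},\vec{y}}$, whereas you package it as conjugation by the diagonal unitary $S$. Your formulation has the minor advantage of yielding both inclusions at once via unitary invariance of the numerical range, but the underlying mechanism is identical.
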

\begin{proof}
Given $\vec{x}=(x_1,x_2,\ldots,x_{n+1})$ a vector in $\C^{n+1}$ with $\|x\|=1$, let us denote by 
\[
\vec{y}=(x_1,-x_2,x_3,\dots, (-1)^{n-1}x_{n}, (-1)^{n}x_{n+1})
\]
the vector obtained from $\vec{x}$ by alternating a minus sign in its components. Then $\vec{y}$ also has norm one. Moreover
\begin{align*}
     \inpr{\left(B_n-J_n\right)\vec{y},\vec{y}} ={} &  (-x_1-x_2)\overline{x_1}+x_3(-\overline{x_2})-x_4\overline{x_3}+\cdots \\
     & + (-1)^n x_{n+1} (-1)^{n-1}\overline{x_n} -(-1)^{n}x_{n+1}(-1)^{n}\overline{x_{n+1}}\\
    ={} & -|x_1|^2-x_2\overline{x_1}-x_3\overline{x_2}-x_4\overline{x_3}-\dots-x_{n+1}\overline{x_n}-|x_{n+1}|^2\\
    ={} & -\inpr{\left(B_n+J_n\right)\vec{x},\vec{x}}.
\end{align*}
The statement then follows.
\end{proof}
Hence, the closure of the numerical range of $T$ in Conjecture~\ref{conjecture}, can be further reduced to depend on the numerical range of  a single $(n+1)\times(n+1)$ matrix.


\bibliographystyle{plain}

\end{document}